%% file: main.tex
\documentclass[11pt,a4paper]{article}

\usepackage[T1]{fontenc}
\usepackage[utf8]{inputenc}
\usepackage{lmodern}
\usepackage[margin=1in]{geometry}
\usepackage{amsmath,amssymb,amsthm}
\usepackage{graphicx}
\usepackage{flafter}
\usepackage{subcaption}
\usepackage{booktabs,multirow}
\usepackage{algorithm}
\usepackage{algorithmic}
\usepackage[numbers,sort&compress]{natbib}
\usepackage{microtype}
\usepackage[hidelinks]{hyperref}

\hypersetup{
  pdftitle={High Performance Computing of SDRE models with Discrete Kalman Filtering for Robust H-infinity-Controls},
  pdfauthor={Yunfeng Cai, Tiexiang Li, Wen-Wei Lin, Junxin Zhang},
  pdfkeywords={SDRE, H-infinity control, Kalman filtering, SDA, Newton-Kleinman}
}
\allowdisplaybreaks
\numberwithin{equation}{section}
\newtheorem{Theorem}{Theorem}[section]

\newtheorem{Proposition}[Theorem]{Proposition}

\theoremstyle{remark}
\newtheorem{Remark}{Remark}[section]

\theoremstyle{definition}
\newtheorem{Definition}{Definition}[section]

\def\bb{\mathbf{b}}
\def\be{\mathbf{e}}
\def\bd{\mathbf{d}}

\def\bu{\mathbf{u}}

\def\bw{\mathbf{w}}
\def\bx{\mathbf{x}}
\def\by{\mathbf{y}}
\def\bs{\mathbf{s}}
\def\bz{\mathbf{z}}

\input{generated/paper_numbers.tex}

\input{generated/paper_feedforward_numbers.tex}

\title{High Performance Computing of SDRE models with Discrete Kalman Filtering for Robust \texorpdfstring{$H_\infty$}{H-infinity}-Controls}
\author{%
\normalsize Yunfeng Cai$^{1}$\quad
Tiexiang Li$^{2,3}$\thanks{Corresponding author: \href{mailto:txli@seu.edu.cn}{txli@seu.edu.cn}.}\quad
Wen-Wei Lin$^{3}$\quad Junxin Zhang$^{3,4}$\\[1em]
\parbox{0.96\textwidth}{\centering\small
$^{1}$Beijing Institute of Mathematical Sciences and Applications (BIMSA),\\
Beijing 101408, China\\[0.4em]
$^{2}$School of Mathematics and Shing-Tung Yau Center,\\
Southeast University, Nanjing 211189, China\\[0.4em]
$^{3}$Shanghai Institute for Mathematics and Interdisciplinary Sciences (SIMIS),\\
Shanghai 200433, China\\[0.4em]
$^{4}$Research Institute of Intelligent Complex Systems,\\
Fudan University, Shanghai 200433, China\\[0.7em]
\href{mailto:caiyunfeng@bimsa.cn}{\texttt{caiyunfeng@bimsa.cn}}\quad
\href{mailto:txli@seu.edu.cn}{\texttt{txli@seu.edu.cn}}\\
\href{mailto:wwlin@outlook.com}{\texttt{wwlin@outlook.com}}\quad
\href{mailto:jxzhang@simis.cn}{\texttt{jxzhang@simis.cn}}
}}
\date{}

\begin{document}
\maketitle

\begin{abstract}
State-dependent Riccati equation (SDRE) control requires repeated online solution of continuous-time algebraic Riccati equations (CAREs). A projected-channel $H_\infty$ formulation preserves the physical actuator and sensor dimensions of under-actuated, partially observed systems. We derive a computable sufficient attenuation bound for positive-semidefinite stabilizing Riccati solutions and construct an update that enforces their spectral-radius coupling condition. A discrete Kalman recursion supplies the scheduling state estimates. The two CAREs arising in controller synthesis are solved by a structure-preserving doubling algorithm (SDA) or a warm-started Newton--Kleinman iteration equipped with a doubling Lyapunov solver. Numerical studies use a twelve-state F-16 model with complete aerodynamic forces, moments, and trim, together with quadrotor spiral tracking. With instantaneous command updates, all three CARE backends give essentially the same closed-loop response. In the tracking simulations, measured computation times determine when new control inputs are applied, and the previous inputs are held during computation. SDA and Newton reduce airspeed and pitch errors in all six paired aircraft tracking runs. In the faster quadrotor task with limited computational resources, SDA and Newton complete all paired runs, while runs using MATLAB \texttt{icare} terminate early.
\end{abstract}

{\small\noindent\textbf{Keywords:} state-dependent Riccati equation; $H_\infty$ control; Kalman filtering; structure-preserving doubling; Newton--Kleinman method; under-actuated systems.\par}

\section{Introduction}

Nonlinear feedback methods for aerial vehicles address state-dependent dynamics and external disturbances in trajectory tracking and attitude stabilization.

Among these methods, the state-dependent Riccati equation (SDRE) approach provides a systematic framework for suboptimal nonlinear feedback design and is often viewed as a nonlinear counterpart of linear quadratic regulator (LQR) control~\cite{clou:1997,Cime:2010,Cime:2012}. SDRE represents the nonlinear system in a state-dependent coefficient (SDC) form and constructs a feedback law from a Riccati equation associated with the instantaneous frozen state~\cite{livl:2015,lilc:2018}. In this way, the state dependence of the nonlinear dynamics is retained while the familiar Riccati-based synthesis machinery of linear control remains applicable. This formulation requires repeated Riccati solves as the state evolves. Consequently, the online solution of a sequence of closely related continuous-time algebraic Riccati equations (CAREs) becomes a major computational bottleneck in SDRE-based control~\cite{livl:2015,lilc:2018}, and efficient structure-preserving numerical methods are important for reducing the resulting computational cost~\cite{lili:2021,hklw:2024,hukl:2026}.

Robustness against external disturbances and model uncertainties can be incorporated through an $H_\infty$ performance criterion. In the classical measurement-output-feedback setting, the central $H_\infty$-controller is characterized by two Riccati equations together with a spectral-radius condition~\cite{gd:1988,dgkf:1989}; invariant-subspace characterizations are also available~\cite{lxy:2001}. Nonlinear $H_\infty$ synthesis based on SDRE ideas has subsequently been investigated in various forms~\cite{fgim:2015}. For under-actuated and partially observed systems, an additional issue is to formulate the disturbance and performance channels in a way that is compatible with the physical actuator and sensor dimensions while retaining a realizable output-feedback controller.

Implementing state-dependent feedback requires state information, whereas the available measurements may be noisy and incomplete. Kalman filtering provides a natural mechanism for estimating the system state from such measurements~\cite{simo:2006}, and SDC-based filtering approaches use state-dependent coefficient representations of nonlinear systems~\cite{beit:2012}. When a continuous-time state-dependent estimator recomputes a steady-state Kalman gain at each frozen state, an additional CARE must be solved for state estimation. For a fixed attenuation level, this formulation involves three CAREs: two for the $H_\infty$ controller and one for the estimator. In this work, a discrete Kalman recursion updates the covariance and gain through prediction and correction, leaving repeated CARE computation to the two controller equations.

We therefore focus on the repeated solution of the two $H_\infty$ CAREs arising in the controller synthesis. For CARE computation, Newton--Kleinman iterations provide a classical iterative approach~\cite{klei:1968,feit:2009}, whereas Schur and generalized-eigenvalue methods provide standard direct alternatives~\cite{laub:1979,arla:1984}. Doubling methods range from early continuous-time constructions~\cite{kimu:1989} to structure-preserving algorithms for CAREs~\cite{chfl:2005}, together with subsequent analyses of convergence~\cite{lixu:2006}, weakly stabilizing solutions~\cite{huli:2009}, and shift-parameter selection~\cite{hull:2017}; broader accounts and comparisons are given in~\cite{hull:2018}. Along an SDRE trajectory, successive frozen states generate a sequence of nearby CAREs. This repeated-solve structure makes it natural to reuse information from preceding states and motivates numerical strategies specifically designed for efficient online updates.

Riccati-based control methods have been used extensively in aerial-vehicle applications. Representative studies include aircraft stabilization~\cite{cpws:2022}, computationally efficient SDRE control of a helicopter benchmark~\cite{lili:2021}, and SDRE or $H_\infty$ designs for quadrotors~\cite{neao:2022,ChHu:2022}; a tutorial treatment of multirotor modeling, estimation, and control is provided in~\cite{makc:2012}. Practical precedents for online and under-actuated SDRE implementation include real-time control of an under-actuated robot~\cite{eraa:2001}, autonomous-helicopter flight control~\cite{bowa:2007}, and recent onboard quadrotor flight tests~\cite{neol:2025}.

Against this background, this work develops an efficient computational framework for SDRE models with discrete Kalman filtering and measurement-output-feedback $H_\infty$ control, with applications to aircraft and quadrotor systems. The framework has three main components. First, for under-actuated and partially observed systems, the disturbance and performance channels are projected onto the column space of the actuator matrix and the row space of the measurement matrix, respectively, while the physical actuator and sensor dimensions are retained. For the resulting projected formulation, we establish an explicit sufficient attenuation bound under which the associated CAREs admit stabilizing positive-semidefinite solutions. Using classical Riccati monotonicity~\cite{Hewer:1993,lich:1993}, we obtain an attenuation update that enforces condition (C3). Second, the repeated controller CAREs are solved by a structure-preserving doubling algorithm (SDA) and by a warm-started Newton--Kleinman iteration equipped with a doubling Lyapunov solver. The latter reuses the preceding accepted Riccati solutions along the frozen-state trajectory and employs a stabilizing safeguard when necessary. Third, a discrete Kalman recursion replaces the estimator CARE by recursive matrix operations, thereby removing one Riccati solve from every frozen-state update.

The key contributions of this paper are summarized as follows.

\begin{itemize}

\item \textbf{A realizable projected-channel formulation and frozen-state feasibility result}. We formulate the measurement-output-feedback $H_\infty$ SDRE problem by projecting the disturbance and performance channels onto the column space of the actuator matrix and the row space of the measurement matrix, respectively, while preserving the physical actuator and sensor dimensions. For the projected CAREs, we derive an explicit sufficient attenuation bound ensuring the existence of stabilizing positive-semidefinite solutions. We use classical Riccati monotonicity to obtain an attenuation update with a prescribed margin for condition (C3).

\item \textbf{Efficient repeated CARE solution for SDRE--$H_\infty$ control}. We employ SDA~\cite{lixu:2006,hull:2018} and a Newton--Kleinman-type method~\cite{hklw:2024} for the two controller CAREs arising at each frozen state. The Newton implementation exploits the sequential nature of SDRE synthesis by warm-starting from the preceding accepted Riccati solutions and incorporates a stabilizing safeguard. Its repeated-solve cost is compared with SDA and the MATLAB solver \texttt{icare}.

\item \textbf{Discrete Kalman recursion for computational efficiency}. We integrate discrete Kalman prediction and correction with SDRE controller synthesis. The recursion updates the state estimate, covariance, and gain, and the posterior estimate determines the state-dependent coefficients used in controller synthesis. Recursive covariance propagation avoids an additional estimator CARE at each frozen-state update, leaving the two $H_\infty$ controller CAREs as the repeated algebraic Riccati solves.

\item \textbf{Numerical validation on aerial-vehicle models}. The framework is tested on a twelve-state F-16 model with complete aerodynamic forces, moments, and straight-level trim~\cite{isrlab:F16Model}, and on nominal and faster quadrotor spiral tracking~\cite{ChHu:2022}. Instantaneous updates yield essentially solver-independent accuracy. Measured computation costs are further incorporated into the command-update schedule of the tracking experiments.

\end{itemize}

Throughout this paper, $I_k$ denotes the $k\times k$ identity matrix,
${\rm Im}=i\mathbb{R}$ denotes the imaginary axis, and $\sigma(M)$ denotes
the spectrum of a square matrix $M$.

The remainder of the paper is organized as follows. Section~2 reviews the frozen-state $H_\infty$ output-feedback conditions. Section~3 develops the projected-channel formulation and the numerical methods for the repeated controller CAREs. Section~4 introduces the discrete Kalman state-estimation recursion. Section~5 presents the aircraft and quadrotor models, Section~6 reports the numerical experiments, and Section~7 concludes the paper.

\section{SDRE for \texorpdfstring{$H_\infty$}{H-infinity}-control systems}
We consider nonlinear measurement-feedback $H_\infty$ control under process
disturbances and sensor noise using state-dependent Riccati equations
(SDREs) \cite{simo:2006,fgim:2015}. In common-input notation, the generalized
plant $\mathsf{G}$ is represented as
\begin{subequations}\label{eq2.1}
\begin{align}
 \dot{\bx}&=A\bx+B_1\bw+B_2\bu,\quad\bx(0)=\bx_0,\label{eq2.1a}\\
 \bz&=C_1\bx+D_{12}\bu,\label{eq2.1b}\\
 \by&=C_2\bx+D_{21}\bw.\label{eq2.1c}
\end{align}
\end{subequations}
Here $\bx\in\mathbb{R}^n$, $\bw\in\mathbb{R}^{m_1}$,
$\bu\in\mathbb{R}^{m_2}$, $\bz\in\mathbb{R}^{p_1}$ and
$\by\in\mathbb{R}^{p_2}$ are the state, exogenous input, control,
performance output and measurement output, respectively, and
$\bx_0\in\mathbb{R}^n$ is the initial state.
The matrices $A\equiv A(\bx)\in\mathbb{R}^{n\times n}$,
$B_i\equiv B_i(\bx)\in\mathbb{R}^{n\times m_i}$,
$C_i\equiv C_i(\bx)\in\mathbb{R}^{p_i\times n}$,
$D_{12}\equiv D_{12}(\bx)\in\mathbb{R}^{p_1\times m_2}$ and
$D_{21}\equiv D_{21}(\bx)\in\mathbb{R}^{p_2\times m_1}$, with $i=1,2$,
may depend on $\bx$.

To distinguish process disturbances from sensor noise, write
\begin{align*}
 \bw&=[\bw_d^{\top},\bw_n^{\top}]^{\top},\qquad m_1=m_d+m_n,\\
 B_1&=[B_{1d},0_{n\times m_n}],\\
 D_{21}&=[0_{p_2\times m_d},D_{21,n}],
\end{align*}
where $\bw_d\in\mathbb{R}^{m_d}$ is the process disturbance in the chosen
coordinates, $\bw_n\in\mathbb{R}^{m_n}$ is sensor noise, and
$B_{1d}\in\mathbb{R}^{n\times m_d}$ and
$D_{21,n}\in\mathbb{R}^{p_2\times m_n}$ are the corresponding channel
blocks. Thus $B_1D_{21}^{\top}=0$.

For synthesis, we fix a frozen state and evaluate all coefficient matrices
at that state. The transfer matrices and linear-system results below refer
to this frozen realization, with the state dependence suppressed in the notation.
In the tracking applications, the reference data are fixed at the synthesis
instant together with the scheduling state.
In the model in \eqref{eq2.1}, the direct feedthrough matrices from $\bw$ to $\bz$
and from $\bu$ to $\by$ satisfy $D_{11}=0$ and $D_{22}=0$.
We impose the following assumptions on the resulting frozen matrices:
\begin{itemize}
\item[(A1)] $(A,B_2)$ is  stabilizable and $(C_1,A)$ is  detectable;
\item[(A2)] $(A,B_1)$ is  stabilizable and $(C_2,A)$ is  detectable;
\item[(A3)] The feedthrough matrices satisfy
\begin{align*}
 D_{12}^{\top}[C_1,D_{12}]&=[0_{m_2\times n},I_{m_2}],\\
 D_{21}[B_1^{\top},D_{21}^{\top}]&=[0_{p_2\times n},I_{p_2}].
\end{align*}
\end{itemize}

When (A3) is not satisfied, a general treatment is given
in~\cite{gd:1988}. For convenience, we describe here the normalization
for the case $D_{12}^{\top}C_1=0$ and $B_1D_{21}^{\top}=0$.
When $D_{12}$ has full column rank and
$D_{21}$ has full row rank, define
\begin{align*}
 R_u&=D_{12}^{\top}D_{12}\in\mathbb{R}^{m_2\times m_2},&
 R_y&=D_{21}D_{21}^{\top}\in\mathbb{R}^{p_2\times p_2}.
\end{align*}
Both matrices are positive definite. The input scaling and measurement
whitening
$\bu_{\mathrm{N}}=R_u^{1/2}\bu$ and
$\by_{\mathrm{N}}=R_y^{-1/2}\by$ give
\begin{align*}
 B_{2,\mathrm{N}}&=B_2R_u^{-1/2},&
 D_{12,\mathrm{N}}&=D_{12}R_u^{-1/2},\\*
 C_{2,\mathrm{N}}&=R_y^{-1/2}C_2,&
 D_{21,\mathrm{N}}&=R_y^{-1/2}D_{21}.
\end{align*}
These transformations preserve the zero cross terms and give
$D_{12,\mathrm{N}}^{\top}D_{12,\mathrm{N}}=I_{m_2}$ and
$D_{21,\mathrm{N}}D_{21,\mathrm{N}}^{\top}=I_{p_2}$, so the transformed
plant satisfies (A3). The normalized controller is implemented through
$\by_{\mathrm{N}}=R_y^{-1/2}\by$ and
$\bu=R_u^{-1/2}\bu_{\mathrm{N}}$, preserving the closed-loop map
from $\bw$ to $\bz$. Below, we retain the original symbols for the
matrices satisfying (A1)--(A3), after normalization when needed.

We consider an $n$-state measurement-feedback controller $\mathsf{K}$
interconnected with the plant $\mathsf{G}$ as in Figure~\ref{fig1}(a).
Its state $\xi\in\mathbb{R}^n$ satisfies
\begin{align*}
    \left\{\begin{array}{l}
         \dot{\xi}=\widehat{A}_0 \xi+\widehat{B}_0\by, \\
         \bu=\widehat{C}_0\xi, 
    \end{array}\right.
\end{align*}
in which $\widehat{A}_0\in\mathbb{R}^{n\times n}$,
$\widehat{B}_0\in\mathbb{R}^{n\times p_2}$ and
$\widehat{C}_0\in\mathbb{R}^{m_2\times n}$ are the controller matrices to be determined.
We denote the closed-loop system obtained by interconnecting $\mathsf{G}$
and $\mathsf{K}$ by $\widehat{\mathsf{G}}$, with input $\bw\in\mathbb{R}^{m_1}$
and output $\bz\in\mathbb{R}^{p_1}$.
With $\bs=[\bx^{\top},\xi^{\top}]^{\top}\in\mathbb{R}^{2n}$,
its realization in Figure~\ref{fig1}(b) is
\begin{subequations}\label{eq2.3}
\begin{align}
 \dot{\bs}&=\mathcal{A}\bs+\mathcal{B}\bw,
 &\bz&=\mathcal{C}\bs,\label{eq2.3a}\\
 \mathcal{A}&=\begin{bmatrix}
 A&B_2\widehat{C}_0\\
 \widehat{B}_0C_2&\widehat{A}_0
 \end{bmatrix},
 &\mathcal{B}&=\begin{bmatrix}B_1\\\widehat{B}_0D_{21}\end{bmatrix},
 \label{eq2.3b}\\
 \mathcal{C}&=\begin{bmatrix}C_1&D_{12}\widehat{C}_0\end{bmatrix}.
 \label{eq2.3c}
\end{align}
\end{subequations}
Here $\mathcal{A}\in\mathbb{R}^{2n\times2n}$,
$\mathcal{B}\in\mathbb{R}^{2n\times m_1}$ and
$\mathcal{C}\in\mathbb{R}^{p_1\times2n}$.

\begin{figure}[tbp]
    \centering
    \begin{subfigure}[t]{0.48\textwidth}
        \centering
        \includegraphics[width=\linewidth]{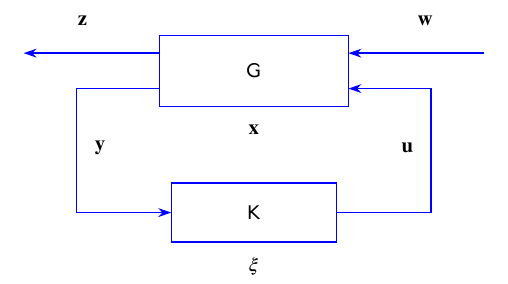}
        \caption{Plant $\mathsf{G}$ interconnected with controller $\mathsf{K}$.}
    \end{subfigure}\hfill
    \begin{subfigure}[t]{0.48\textwidth}
        \centering
        \includegraphics[width=\linewidth]{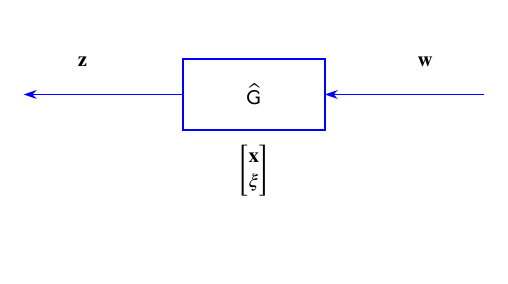}
        \caption{Equivalent closed-loop realization $\widehat{\mathsf{G}}$.}
    \end{subfigure}
    \caption{Measurement-output-feedback $H_{\infty}$ interconnection and its closed-loop representation.}
    \label{fig1}
\end{figure}

At a fixed frozen state, the transfer matrix of $\widehat{\mathsf{G}}$
from $\bw$ to $\bz$ is
\begin{align*}
    T_{\bz\bw}(s)=\mathcal{C}(sI_{2n}-\mathcal{A})^{-1}\mathcal{B}
    \in\mathbb{C}^{p_1\times m_1},
\end{align*}
where $s\in\mathbb{C}$ lies outside $\sigma(\mathcal{A})$.
For a stable closed-loop realization, its $H_\infty$ norm is $\|\widehat{\mathsf{G}}\|_{\infty}\equiv \|T_{\bz\bw}\|_{\infty}:= \sup_{\omega\in \mathbb{R}}\|T_{\bz\bw}(i\omega)\|_2$. 

For a prescribed $\gamma>0$, the synthesis objective is to choose
$\mathsf{K}$ so that $\mathcal{A}$ is stable and
$\|T_{\bz\bw}\|_\infty<\gamma$. A controller that internally stabilizes
the frozen plant is called admissible.
We recall the definitions of Hamiltonian matrices and the Riccati operator,
together with the Lyapunov theorem and the strict bounded real lemma.

\begin{Definition}[Hamiltonian matrices and Riccati domain {\cite{dgkf:1989}}]\label{def:ric-domain}
Let $r$ be a positive integer. A matrix $H\in\mathbb{R}^{2r\times2r}$
is called Hamiltonian if
\begin{align*}
 H^{\top}J_r+J_rH=0,\qquad
 J_r=\begin{bmatrix}0&I_r\\-I_r&0\end{bmatrix}
 \in\mathbb{R}^{2r\times2r}.
\end{align*}
Equivalently, $H$ has the block form
\begin{align*}
 H=\begin{bmatrix}F&-R\\-Q&-F^{\top}\end{bmatrix},
\end{align*}
where $F,R,Q\in\mathbb{R}^{r\times r}$,
$R=R^{\top}$ and $Q=Q^{\top}$.
We write $H\in\operatorname{Dom}(\operatorname{Ric})$ if and only if
$H$ has no eigenvalues on the imaginary axis and its $r$-dimensional
stable invariant subspace has a basis
\begin{align*}
 U=\begin{bmatrix}U_1\\U_2\end{bmatrix}\in\mathbb{R}^{2r\times r},
 \qquad U_1,U_2\in\mathbb{R}^{r\times r},
\end{align*}
with $U_1$ nonsingular. In that case, define
\begin{align*}
 \operatorname{Ric}(H):=P=U_2U_1^{-1}\in\mathbb{R}^{r\times r}.
\end{align*}
The matrix $P$ is independent of the chosen basis and is the unique
symmetric stabilizing solution of
\begin{align*}
 F^{\top}P+PF-PRP+Q=0,
\end{align*}
meaning that $F-RP$ is stable.
\end{Definition}

\begin{Theorem}[Lyapunov theorem {\cite{simo:2006}}]\label{thm2.2}
Let $F\in\mathbb{R}^{r\times r}$ be stable and let
$S=S^{\top}\in\mathbb{R}^{r\times r}$. The Lyapunov equation
\begin{align*}
 F^{\top}P+PF+S=0
\end{align*}
has a unique solution $P\in\mathbb{R}^{r\times r}$, given by
\begin{align*}
 P=\int_0^{\infty}e^{F^{\top}t}S e^{Ft}\,dt=P^{\top}.
\end{align*}
In particular, $S\ge0$ implies $P\ge0$, and $S\le0$ implies $P\le0$.
\end{Theorem}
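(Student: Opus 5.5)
The argument verifies directly that the integral formula defines a solution and then shows that no other solution exists; symmetry and the sign statements follow from the formula.

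\emph{Convergence.} Since $F$ is stable, there are constants $c>0$ and $\alpha>0$ with $\|e^{Ft}\|\le c e^{-\alpha t}$ for all $t\ge0$. This bound can be obtained from the Jordan form of $F$, or by taking any $\alpha$ with $0<\alpha<-\max\operatorname{Re}\sigma(F)$. Because $e^{F^{\top}t}=(e^{Ft})^{\top}$, the integrand satisfies
\begin{align*}
 \|e^{F^{\top}t}Se^{Ft}\|\le c^2\|S\|e^{-2\alpha t}.
\end{align*}
Hence the integral $P$ converges absolutely. Each integrand is symmetric when $S=S^{\top}$, so $P=P^{\top}$.

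\emph{Existence.} Set $\Phi(t)=e^{F^{\top}t}Se^{Ft}$. Then
\begin{align*}
 \frac{d}{dt}\Phi(t)=F^{\top}\Phi(t)+\Phi(t)F.
\end{align*}
Integrating from $0$ to $T$ gives
\begin{align*}
 \Phi(T)-S=F^{\top}\int_0^T\Phi\,dt+\Big(\int_0^T\Phi\,dt\Big)F.
\end{align*}
As $T\to\infty$, the exponential bound gives $\Phi(T)\to0$. The remaining integrals converge to $P$, so $F^{\top}P+PF+S=0$.

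\emph{Uniqueness.} The map $\mathcal{L}(X)=F^{\top}X+XF$ can be written as the Kronecker-sum operator $I\otimes F^{\top}+F^{\top}\otimes I$. Its eigenvalues are $\lambda_i+\lambda_j$ with $\lambda_i,\lambda_j\in\sigma(F)$. Each such sum has negative real part, so none is zero, and $\mathcal{L}$ is invertible.

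An alternative avoids the Kronecker form. If $X$ solves the homogeneous equation $F^{\top}X+XF=0$, set $\Psi(t)=e^{F^{\top}t}Xe^{Ft}$. Then $\Psi'(t)=e^{F^{\top}t}(F^{\top}X+XF)e^{Ft}=0$, so $\Psi$ is constant. Since $\Psi(t)\to0$ as $t\to\infty$, it follows that $X=\Psi(0)=0$. By linearity, any two solutions differ by such an $X$, so the solution is unique.

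\emph{Sign statements.} For any vector $v$,
\begin{align*}
 v^{\top}Pv=\int_0^\infty (e^{Ft}v)^{\top}S(e^{Ft}v)\,dt.
\end{align*}
If $S\ge0$, the integrand is nonnegative, so $P\ge0$. The case $S\le0$ follows by applying this to $-S$.

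The only step requiring care is the exponential decay estimate for $e^{Ft}$. It is standard: use the Jordan form of $F$, or the resolvent representation of $e^{Ft}$.
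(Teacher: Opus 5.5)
Your proof is correct and complete. The convergence bound, existence by integrating $\frac{d}{dt}\bigl(e^{F^{\top}t}Se^{Ft}\bigr)$, uniqueness via the Kronecker sum or the conjugation argument, and the sign claims via $v^{\top}Pv$ are all sound. The paper gives no proof of its own; it quotes this standard result from \cite{simo:2006}, and yours is the standard textbook argument. Your uniqueness holds over all of $\mathbb{R}^{r\times r}$, not just symmetric matrices. That is exactly what the paper relies on when it invokes this theorem in the proof of Theorem~\ref{thm3.2} to conclude $X(\alpha)=X(\alpha)^{\top}$.
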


\begin{Theorem}[Strict bounded real lemma {\cite[Lemma~3.1]{lxy:2001}}]\label{thm2.3}
Let $r,m,p$ be positive integers, let $F\in\mathbb{R}^{r\times r}$,
$B\in\mathbb{R}^{r\times m}$ and $C\in\mathbb{R}^{p\times r}$,
and let $\gamma>0$. Define
\begin{align*}
 T(s)&=C(sI_r-F)^{-1}B\in\mathbb{C}^{p\times m},\\
 H&=\begin{bmatrix}F&\gamma^{-2}BB^{\top}\\-C^{\top}C&-F^{\top}\end{bmatrix}
 \in\mathbb{R}^{2r\times2r},
\end{align*}
where $s\in\mathbb{C}\setminus\sigma(F)$.
Then the following statements are equivalent:
\begin{itemize}
 \item[(i)] There is $P\in\mathbb{R}^{r\times r}$ with $P=P^{\top}\ge0$ such that
 \begin{align}\label{eq2.6}
 H\begin{bmatrix}I_r\\P\end{bmatrix}
 =\begin{bmatrix}I_r\\P\end{bmatrix}W,
 \end{align}
 where $W=F+\gamma^{-2}BB^{\top}P\in\mathbb{R}^{r\times r}$ is stable.
 \item[(ii)] $F$ is stable and $\|T\|_\infty<\gamma$.
\end{itemize}
\end{Theorem}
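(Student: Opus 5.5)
The plan is to prove the two implications separately, working with the Riccati form of the invariant-subspace relation \eqref{eq2.6}. Reading off the two block rows of \eqref{eq2.6} gives $W=F+\gamma^{-2}BB^{\top}P$ from the first row. The second row then reads $-C^{\top}C-F^{\top}P=PW$, which is equivalent to the algebraic Riccati equation
\begin{align*}
 F^{\top}P+PF+\gamma^{-2}PBB^{\top}P+C^{\top}C=0 .
\end{align*}
So (i) says precisely that this equation has a symmetric positive-semidefinite solution with $F+\gamma^{-2}BB^{\top}P$ stable.

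For (i)$\Rightarrow$(ii), I would do two things.

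\emph{Stability of $F$.} Rewrite the Riccati equation as the Lyapunov-type identity
\begin{align*}
F^{\top}P+PF=-(C^{\top}C+\gamma^{-2}PBB^{\top}P)\le0 .
\end{align*}
Stability of $F$ does not follow from Theorem~\ref{thm2.2} directly, since $P$ may be singular. Instead, suppose $Fv=\lambda v$ with $\operatorname{Re}\lambda\ge0$. Multiplying the identity by $v^{*}$ and $v$ gives $2\operatorname{Re}\lambda\,v^{*}Pv=-\|Cv\|^2-\gamma^{-2}\|B^{\top}Pv\|^2\le0$. Because $P\ge0$, both sides must vanish, so $B^{\top}Pv=0$. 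Then $Wv=Fv+\gamma^{-2}BB^{\top}Pv=\lambda v$, which contradicts the stability of $W$. Hence $F$ is stable.

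\emph{Norm bound.} Use the standard completion of squares. Along $\dot x=Fx+Bw$ with $x(0)=0$ and $w\in L_2$, differentiating $x^{\top}Px$ and using the Riccati equation yields
\begin{align*}
\|z\|^2-\gamma^2\|w\|^2=-\gamma^2\|w-\gamma^{-2}B^{\top}Px\|^2-\tfrac{d}{dt}(x^{\top}Px).
\end{align*}
Integrating shows $\|T\|_\infty\le\gamma$. To get the strict inequality, note that the map $w\mapsto v:=w-\gamma^{-2}B^{\top}Px$ has inverse $\dot x=Wx+Bv$, $w=v+\gamma^{-2}B^{\top}Px$, which is bounded on $L_2$ because $W$ is stable. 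Hence $\|v\|\ge c\|w\|$ for some $c>0$, giving $\|z\|^2\le(\gamma^2-\gamma^2c^2)\|w\|^2$, and therefore $\|T\|_\infty<\gamma$.

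For (ii)$\Rightarrow$(i), I would proceed in three steps.

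\emph{Step 1: no imaginary eigenvalues.} Show $H$ has none. If $H[x;y]=i\omega[x;y]$, then $x=(i\omega I-F)^{-1}\gamma^{-2}BB^{\top}y$, using that $F$ is stable. Eliminating, one obtains $(\gamma^2I-T(i\omega)^{*}T(i\omega))B^{\top}y=0$. Since $\|T(i\omega)\|<\gamma$, this forces $B^{\top}y=0$, hence $x=0$. Then $y\ne0$ would be an eigenvector of $-F^{\top}$ at $i\omega$, contradicting stability of $F$.

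\emph{Step 2: the stable subspace is a graph.} Take the $r$-dimensional stable invariant subspace $\operatorname{span}[U_1;U_2]$. Its Hamiltonian structure gives $U_1^{\top}U_2=U_2^{\top}U_1$. Nonsingularity of $U_1$ is the main obstacle. The route I would try first is the $\gamma$-continuation argument. For $\gamma\to\infty$, $H$ tends to the block-triangular matrix with $F$ stable, whose stable subspace is a graph. Along $\gamma'\in[\gamma,\infty)$, Step~1 applies to every $\gamma'$, since $\|T\|_\infty<\gamma'$. So the stable subspace varies continuously and $P(\gamma')$ stays bounded. Boundedness would come from monotonicity: $P(\gamma')$ is increasing as $\gamma'$ decreases and is dominated via the Lyapunov representation of Theorem~\ref{thm2.2}. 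This boundedness precludes losing the graph property.

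The more direct alternative is: if $U_1v=0$, then invariance together with the symmetry $U_1^{\top}U_2=U_2^{\top}U_1$ gives $B^{\top}U_2v=0$. From this, $\ker U_1$ is invariant under the stable restriction $\Lambda$, so it contains an eigenvector $\Lambda v=\lambda v$. Then $-F^{\top}U_2v=\lambda U_2v$, which contradicts stability of $F$ since $\operatorname{Re}(-\lambda)>0$.

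\emph{Step 3: sign of $P$.} Set $P=U_2U_1^{-1}$, which is symmetric with $W$ stable. To get $P\ge0$, rewrite the Riccati equation as $W^{\top}P+PW=-(C^{\top}C-\gamma^{-2}PBB^{\top}P)$. That right-hand side is not obviously signed, so instead write $F^{\top}P+PF=-(C^{\top}C+\gamma^{-2}PBB^{\top}P)$. Its right-hand side is $\le0$ and $F$ is stable, so Theorem~\ref{thm2.2} yields $P\ge0$.
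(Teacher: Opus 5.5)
The paper gives no proof of this statement. It imports it verbatim as Lemma~3.1 of Lin--Xu--Yeh, so there is no in-paper argument to match.

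Your proposal is a correct, self-contained proof along the standard lines of the strict bounded real lemma:
\begin{itemize}
\item For (i)$\Rightarrow$(ii), you use completion of squares. Strictness comes from invertibility of $w\mapsto w-\gamma^{-2}B^{\top}Px$, whose inverse system $\dot x=Wx+Bv$ is bounded on $L_2$ because $W$ is stable.
\item For (ii)$\Rightarrow$(i), the resolvent elimination gives $(\gamma^2I-T(i\omega)^{*}T(i\omega))B^{\top}y=0$ and hence $\sigma(H)\cap i\mathbb{R}=\emptyset$. The Lagrangian/kernel argument then shows $U_1$ is invertible, and the Lyapunov representation gives $P\ge0$.
\end{itemize}
Compared with the bare citation, your route shows where each hypothesis enters. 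In the first direction, $P\ge0$ is exactly what forces stability of $F$ (indefinite solutions would not). In the second, stability of $F$ alone plays the role that stabilizability plays in the usual Riccati-domain lemma, so no detectability or stabilizability assumptions are needed.

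Two small remarks.

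First, base Step~2 on the direct kernel argument and drop the $\gamma$-continuation sketch. As written, its boundedness claim is unsupported. The representation $P(\gamma')=\int_0^\infty e^{F^{\top}t}\bigl(C^{\top}C+\gamma'^{-2}P(\gamma')BB^{\top}P(\gamma')\bigr)e^{Ft}\,dt$ has $P(\gamma')$ on the right-hand side, so it gives no a priori bound. A uniform bound would have to come quantitatively from $\|T\|_\infty<\gamma$, for example through a value-function characterization. The claimed monotonicity in $\gamma'$ would also need proof.

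Second, in the kernel argument, state explicitly that $U_2v\neq0$. This follows from $U_1v=0$, $v\neq0$, and full column rank of $[U_1^{\top},U_2^{\top}]^{\top}$. It is what turns $-F^{\top}U_2v=\lambda U_2v$ with $\operatorname{Re}\lambda<0$ into an eigenvalue of $F^{\top}$ in the open right half-plane, which contradicts stability of $F$.

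With these adjustments the proof is complete.
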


Applying Theorem~\ref{thm2.3} to $\widehat{\mathsf{G}}$ in \eqref{eq2.3}
with $r=2n$, $F=\mathcal{A}$, $B=\mathcal{B}$ and $C=\mathcal{C}$
gives $T=T_{\bz\bw}$ and the closed-loop Hamiltonian
$H=\mathcal{H}(\gamma)\in\mathbb{R}^{4n\times4n}$:
\begin{align}\label{eq2.4}
    \mathcal{H}(\gamma)=\left[\begin{array}{cc}
             \mathcal{A}&\frac{1}{\gamma^2}\mathcal{B}\mathcal{B}^{\top} \\
             -\mathcal{C}^{\top}\mathcal{C}&-\mathcal{A}^{\top}  
        \end{array}\right].
\end{align}

Theorem~\ref{thm2.3} assesses a given controller through its closed-loop
realization. We now turn to constructing an admissible controller with the
prescribed attenuation level. The normalized output-feedback result
\cite{gd:1988,dgkf:1989} gives existence conditions in terms of two
$n\times n$ Riccati solutions, denoted by $X$ and $Y$.

\begin{Theorem}\label{thm:DGKF}
At a fixed frozen state, suppose that Assumptions (A1)--(A3) hold, and let $\gamma>0$.
There exists an admissible controller $\mathsf{K}=(\widehat{A}_0,\widehat{B}_0,\widehat{C}_0)$ for \eqref{eq2.3} such that
\begin{subequations}\label{eq2.8}
\begin{align}
 &\|\widehat{\mathsf{G}}\|_{\infty}
   =\|T_{\bz\bw}\|_{\infty}< \gamma,\label{eq2.8a}\\
 &\begin{bmatrix}A&B_2\widehat{C}_0\\
 \widehat{B}_0C_2&\widehat{A}_0\end{bmatrix}
 \quad\text{is stable},\label{eq2.8b}
\end{align}
\end{subequations}
if and only if 
\begin{itemize}
    \item[(C1)] $H_{\infty}(\gamma)\in \operatorname{Dom}(\operatorname{Ric})$ and $X\equiv X(\gamma)={\rm Ric}(H_{\infty}(\gamma))\in\mathbb{R}^{n\times n}$, where $H_{\infty}(\gamma)\in\mathbb{R}^{2n\times2n}$ is given by
    \begin{subequations}\label{eq2.9}
    \begin{align}\label{eq2.9a}
        &H_{\infty}(\gamma)=\left[\begin{array}{cc}
             A&-G_{\gamma}\\
             -C_1^{\top}C_1&-A^{\top}  
        \end{array}\right],\\
        &H_{\infty}(\gamma)\left[\begin{array}{c}
             I_n \\
             X 
        \end{array}\right]=\left[\begin{array}{c}
             I_n \\
             X 
        \end{array}\right]W_X
    \end{align}
    with $G_{\gamma}=B_2B_2^{\top}-{\gamma^{-2}}B_1B_1^{\top}\in\mathbb{R}^{n\times n}$, $X=X^{\top}\ge 0$, and $W_X=A-G_{\gamma}X\in\mathbb{R}^{n\times n}$ stable. Furthermore, $X$ satisfies the CARE
    \begin{equation}\label{eq2.9b}
        A^{\top}X+XA-XG_{\gamma}X+C_1^{\top}C_1=0.
    \end{equation}
    \end{subequations}
    \item[(C2)] $J_{\infty}(\gamma)\in \operatorname{Dom}(\operatorname{Ric})$ and $Y\equiv Y(\gamma)={\rm Ric}(J_{\infty}(\gamma))\in\mathbb{R}^{n\times n}$, where $J_{\infty}(\gamma)\in\mathbb{R}^{2n\times2n}$ is given by
    \begin{subequations}\label{eq2.10}
    \begin{align}\label{eq2.10a}
        &J_{\infty}(\gamma)\equiv \left[\begin{array}{cc}
             A^{\top}&-H_{\gamma}\\
             -B_1B_1^{\top}&-A  
        \end{array}\right],\\
        &J_{\infty}(\gamma)\left[\begin{array}{c}
             I_n \\
             Y 
        \end{array}\right]=\left[\begin{array}{c}
             I_n \\
             Y 
        \end{array}\right]W_Y
    \end{align}
    with $H_\gamma=C_2^{\top}C_2-\gamma^{-2}C_1^{\top}C_1\in\mathbb{R}^{n\times n}$, $Y=Y^{\top}\ge 0$ and $W_Y=A^{\top}-H_{\gamma}Y\in\mathbb{R}^{n\times n}$ stable. Furthermore, $Y$ satisfies the CARE
    \begin{align}\label{eq2.10b}
        AY+YA^{\top}-YH_{\gamma}Y+B_1B_1^{\top}=0.
    \end{align}
    \end{subequations}
    \item[(C3)] The spectral radius of $XY$ satisfies
    \begin{equation}\label{eq2.11}
        \rho(XY)<\gamma^2.
    \end{equation}
\end{itemize}
\end{Theorem}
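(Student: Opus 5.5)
We prove the two directions separately, working throughout with the frozen realization \eqref{eq2.3} and using Theorem~\ref{thm2.3} as the bridge between $\|T_{\bz\bw}\|_\infty<\gamma$ with stability of $\mathcal{A}$ on the one hand, and a stabilizing positive-semidefinite solution of the closed-loop Riccati equation for $\mathcal{H}(\gamma)$ on the other.

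\textbf{Sufficiency.} Assume (C1)--(C3) hold and take the central controller
\begin{align*}
\widehat{C}_0=-B_2^{\top}X,\qquad
\widehat{B}_0=Z\,YC_2^{\top},\qquad
\widehat{A}_0=A+\gamma^{-2}B_1B_1^{\top}X+B_2\widehat{C}_0-\widehat{B}_0C_2,
\end{align*}
where $Z=(I_n-\gamma^{-2}YX)^{-1}$ exists by (C3). We then exhibit a solution of \eqref{eq2.6} for $\mathcal{H}(\gamma)$ of the form
\begin{align*}
P=\begin{bmatrix}X&-X\\-X&X+\gamma^2(YZ')^{-1}\end{bmatrix},
\end{align*}
with $Z'$ chosen suitably, using the standard DGKF block. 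It is cleaner to change coordinates to $(\bx,\bx-\xi)$, so that $P=\operatorname{diag}(X,\gamma^2 Y^{-1}-X)$ whenever $Y>0$; the case $Y\ge0$ is handled by perturbation or the inverse-free form. Here $P\ge0$ is equivalent to $\gamma^2Y^{-1}-X\ge0$, which is exactly (C3). Verifying the closed-loop Riccati identity is a block computation that uses \eqref{eq2.9b}, \eqref{eq2.10b} and (A3); the orthogonality relations in (A3) kill the cross terms. Stability of $\mathcal{A}+\gamma^{-2}\mathcal{B}\mathcal{B}^{\top}P$ follows because it is block triangular in these coordinates, with diagonal blocks similar to $W_X$ and to $W_Y^{\top}$, which are stable by (C1)--(C2). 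Theorem~\ref{thm2.3} then yields \eqref{eq2.8}.

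\textbf{Necessity.} Given an admissible $\mathsf{K}$ with \eqref{eq2.8}, Theorem~\ref{thm2.3} supplies $P\ge0$ solving the closed-loop Riccati equation with a stabilizing $W$. We partition $P=\begin{bmatrix}P_{11}&P_{12}\\P_{12}^{\top}&P_{22}\end{bmatrix}$.
\begin{itemize}
\item[(i)] A full-information argument gives (C1): the state-feedback problem with $\bu$ and $\bw$ acting on the plant is solvable, so $H_\infty(\gamma)\in\operatorname{Dom}(\operatorname{Ric})$. The comparison $X\le P_{11}$ follows by completing squares in the cost $\|\bz\|^2-\gamma^2\|\bw\|^2$, where (A1) ensures detectability and hence $X\ge0$.
\item[(ii)] The dual (transposed) system gives (C2) in the same way, with $Y\le\gamma^2\,(P^{-1})_{11}$, or the analogous inequality derived from a Schur complement.
\item[(iii)] Combining the two bounds gives $\gamma^2Y^{-1}\ge P_{11}-P_{12}P_{22}^{-1}P_{12}^{\top}$ in a suitable sense, so $\gamma^2Y^{-1}-X>0$ and hence $\rho(XY)<\gamma^2$.
\end{itemize}

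\textbf{Main obstacle.} Step (iii) is the hardest part, because it requires handling singular $Y$ or $P_{22}$ and securing the \emph{strict} inequality in (C3). We plan to use strict inequalities from a strict bounded-real (LMI) formulation, perturbing $\gamma$ slightly downward, which is possible since the norm bound is strict. Alternatively, since this theorem is the classical DGKF result, one may simply invoke \cite{gd:1988,dgkf:1989} for the detailed verification.
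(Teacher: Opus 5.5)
The paper gives no proof of this theorem. It quotes the classical result of \cite{gd:1988,dgkf:1989} and refers to \cite[Theorem~4.1]{lxy:2001} for admissibility of the central controller, so your closing fallback is exactly the paper's route. Your self-contained sketch, however, fails at a concrete step of the sufficiency direction. Work in the coordinates $(\bx,\varepsilon)$ with $\varepsilon=\bx-\xi$. Every $\widetilde P=\operatorname{diag}(X,Q)$ annihilates the $(1,1)$ block of the closed-loop Riccati equation (by \eqref{eq2.9b}) and its $(1,2)$ block. It also makes $W=\mathcal A+\gamma^{-2}\mathcal B\mathcal B^{\top}\widetilde P$ block upper triangular with leading block $W_X$. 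Your choice $Q=\gamma^2Y^{-1}-X=\gamma^2(ZY)^{-1}$ does solve the remaining $(2,2)$ equation. But using $\widehat B_0^{\top}Q=\gamma^2C_2$ and then \eqref{eq2.10b}, the trailing block of $W$ is
\[
A+\gamma^{-2}B_1B_1^{\top}X-\widehat B_0C_2+\gamma^{-2}\bigl(B_1B_1^{\top}+\widehat B_0\widehat B_0^{\top}\bigr)Q=A+B_1B_1^{\top}Y^{-1}=-YW_YY^{-1},
\]
which is anti-stable. For example, with $n=1$, $A=-1$, $B_2=C_2=1$, $B_1=[1,0]$, $C_1=[1,0]^{\top}$ and $\gamma=2$, one gets $X=Y\approx0.43$ and this block $\approx+1.32$. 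So your $P$ is the \emph{anti}-stabilizing solution. Theorem~\ref{thm2.3}(i) fails for it, and you obtain neither stability of $\mathcal A$ nor the strict bound.

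The matrix that is similar to $W_Y^{\top}$ is instead the \emph{dual} closed-loop matrix $A+\gamma^{-2}B_1B_1^{\top}X-ZY(C_2^{\top}C_2-\gamma^{-2}XB_2B_2^{\top}X)$. To see this, transform $J_\infty(\gamma)$ by $T=\left[\begin{smallmatrix}I&-\gamma^{-2}X\\0&I\end{smallmatrix}\right]$, which maps ${\rm span}([I,Y]^{\top})$ onto ${\rm span}([I,ZY]^{\top})$. The repair is then as follows. The stabilizing solution is $\operatorname{diag}(X,Q_s)$, where $Q_s$ is the stabilizing solution of the $(2,2)$ equation. That equation is the bounded-real equation of the error system $\bigl(A+\gamma^{-2}B_1B_1^{\top}X-\widehat B_0C_2,\ B_1-\widehat B_0D_{21},\ B_2^{\top}X\bigr)$. $Q_s$ has no closed form in $X$ and $Y$, but it exists by the dual form of Theorem~\ref{thm2.3}, because $ZY=Y^{1/2}(I-\gamma^{-2}Y^{1/2}XY^{1/2})^{-1}Y^{1/2}\ge0$ is the stabilizing dual solution. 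This is where (C3) actually enters, and it requires no inverse of $Y$.

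In the necessity direction, the two bounds do not combine as stated. Bound (ii), $Y\le\gamma^2(P^{-1})_{11}$, gives $\gamma^2Y^{-1}\ge P_{11}-P_{12}P_{22}^{-1}P_{12}^{\top}$. This Schur complement is $\le P_{11}$, so bound (i), $X\le P_{11}$, cannot be compared with it. You need the sharper full-information bound $P\ge\operatorname{diag}(X,0)$, that is, $X\le P_{11}-P_{12}P_{22}^{-1}P_{12}^{\top}$. It holds because the closed-loop storage $\bs_0^{\top}P\bs_0$ dominates $\bx_0^{\top}X\bx_0$ for every controller initial state $\xi_0$.

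Moreover, (ii) does not follow ``in the same way''. The dual closed-loop Riccati equation is solved by $\gamma^2P^{-1}$, but that is its anti-stabilizing solution, the same phenomenon as above. Dualizing the full-information argument gives $Y\le\Sigma_{11}$ for the stabilizing dual solution $\Sigma$. You then need minimality of the stabilizing solution, $\Sigma\le\gamma^2P^{-1}$ for $P>0$, to reach $Y\le\gamma^2(P^{-1})_{11}$. With these corrections you get $Y^{1/2}XY^{1/2}\le\gamma^2I$, and your downward perturbation of $\gamma$ then gives the strict inequality in (C3). Note also that $H_\infty(\gamma)\in\operatorname{Dom}(\operatorname{Ric})$ in step (i) is itself the nontrivial full-information theorem, which you assert rather than prove.
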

The classical stable-invariant-subspace construction \cite{dgkf:1989,lxy:2001} applied to \eqref{eq2.9}--\eqref{eq2.10} gives the central $H_\infty$-controller $\mathsf{K}=(\widehat{A}_0,\widehat{B}_0,\widehat{C}_0)$ for measurement feedback \eqref{eq2.3} as
\begin{subequations}\label{eq2.12}
    \begin{align}
    \widehat{A}_0&=W_X-\widehat{B}_0C_2,\label{eq2.12a}\\
    \widehat{B}_0&=(I_n-\frac{1}{\gamma^2}YX)^{-1}YC_2^{\top},\label{eq2.12b}\\
    \widehat{C}_0&=-B_2^{\top}X,\label{eq2.12c}
\end{align}
\end{subequations}
where $X$, $Y$ and $W_X$ are defined by \eqref{eq2.9} and \eqref{eq2.10}.
Under Assumptions (A1)--(A3) and conditions (C1)--(C3), \eqref{eq2.12}
gives an admissible central controller for the frozen-state interconnection;
see \citet[Theorem~4.1, Eq.~(4.4), and its proof]{lxy:2001}.
\section{Efficient computation of the \texorpdfstring{$H_\infty$ controller $\mathsf{K}$}{H-infinity controller K}}
For the linear realization at each fixed state, this section establishes CARE feasibility and develops SDA and Newton-type iterations for repeatedly solving the two controller CAREs and checking conditions (C1)--(C3) in \eqref{eq2.9}--\eqref{eq2.11}.

\begin{Theorem}\label{thm3.2}
At a fixed frozen state, suppose that Assumptions (A1)--(A2) hold.
Replacing $\gamma^{-2}$ by $\alpha$ in (C1) and (C2) of
\eqref{eq2.9} and \eqref{eq2.10}, respectively, there exists an
$\alpha^*>0$ such that both conditions hold for every
$\alpha\in[0,\alpha^*]$.
\end{Theorem}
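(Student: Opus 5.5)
Replacing $\gamma^{-2}$ by $\alpha$ gives the parametrized Hamiltonians
\begin{align*}
 H(\alpha)=\begin{bmatrix}A&-(B_2B_2^{\top}-\alpha B_1B_1^{\top})\\-C_1^{\top}C_1&-A^{\top}\end{bmatrix},\qquad
 J(\alpha)=\begin{bmatrix}A^{\top}&-(C_2^{\top}C_2-\alpha C_1^{\top}C_1)\\-B_1B_1^{\top}&-A\end{bmatrix}.
\end{align*}
The plan is a perturbation argument starting from $\alpha=0$.

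\textbf{Case $\alpha=0$.} Here $H(0)$ is the Hamiltonian of the LQR CARE $A^{\top}X+XA-XB_2B_2^{\top}X+C_1^{\top}C_1=0$. By (A1), $(A,B_2)$ is stabilizable and $(C_1,A)$ is detectable. The classical LQR theory (e.g.\ \cite{dgkf:1989}) then gives $H(0)\in\operatorname{Dom}(\operatorname{Ric})$ with a unique stabilizing solution $X_0=X_0^{\top}\ge0$. Positivity follows from Theorem~\ref{thm2.2}: rewrite the CARE as the Lyapunov equation
\[
 W_0^{\top}X_0+X_0W_0+X_0B_2B_2^{\top}X_0+C_1^{\top}C_1=0,\qquad W_0=A-B_2B_2^{\top}X_0 \text{ stable}.
\]
The dual statement, using (A2), gives $J(0)\in\operatorname{Dom}(\operatorname{Ric})$ with $Y_0\ge0$.

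\textbf{Continuity in $\alpha$.} $H(\alpha)$ depends affinely on $\alpha$, and $\sigma(H(0))\cap{\rm Im}=\emptyset$. By continuity of eigenvalues there is $\alpha_1>0$ such that for $\alpha\in[0,\alpha_1]$:
\begin{itemize}
\item[(a)] $H(\alpha)$ has no imaginary-axis eigenvalues;
\item[(b)] the $n$-dimensional stable invariant subspace, being the range of a Riesz spectral projector, varies continuously (indeed analytically);
\item[(c)] a continuously chosen basis $[U_1(\alpha);U_2(\alpha)]$ keeps $U_1(\alpha)$ nonsingular, because $U_1(0)$ is nonsingular.
\end{itemize}
Hence $X(\alpha)=U_2U_1^{-1}$ is the continuous stabilizing solution and $W_X(\alpha)$ is stable. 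Alternatively, one can use the implicit function theorem on the CARE, since its Fréchet derivative $Z\mapsto W_X^{\top}Z+ZW_X$ is invertible when $W_X$ is stable.

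\textbf{Positive semidefiniteness, the main obstacle.} Continuity alone does not keep $X(\alpha)\ge0$, since $X_0$ may be singular. I would instead argue directly. Rewrite the CARE as
\[
 W_X^{\top}X+XW_X+XB_2B_2^{\top}X-\alpha XB_1B_1^{\top}X+C_1^{\top}C_1=0.
\]
The indefinite term $-\alpha XB_1B_1^{\top}X$ prevents a direct appeal to Theorem~\ref{thm2.2}. Instead, write it with the stable matrix $A_\alpha:=A-B_2B_2^{\top}X+\alpha B_1B_1^{\top}X=W_X$ as
\[
 (A-B_2B_2^{\top}X)^{\top}X+X(A-B_2B_2^{\top}X)+XB_2B_2^{\top}X+C_1^{\top}C_1+\alpha XB_1B_1^{\top}X\cdot(\text{sign check}).
\]
Regrouping shows
\[
 A_\alpha^{\top}X+XA_\alpha=-\bigl(C_1^{\top}C_1+XB_2B_2^{\top}X+\alpha XB_1B_1^{\top}X\bigr)+2\alpha XB_1B_1^{\top}X.
\]
So the plan is to use the matrix $A-B_2B_2^{\top}X$, which equals $W_X-\alpha B_1B_1^{\top}X$. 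This matrix is stable for small $\alpha$ by continuity, since it reduces to $W_0$ at $\alpha=0$. With respect to it the equation reads
\[
 (A-B_2B_2^{\top}X)^{\top}X+X(A-B_2B_2^{\top}X)=-C_1^{\top}C_1-X(B_2B_2^{\top}+\alpha B_1B_1^{\top})X\le0.
\]
Theorem~\ref{thm2.2} then gives $X(\alpha)\ge0$, after shrinking $\alpha_1$ so that $A-B_2B_2^{\top}X(\alpha)$ stays stable.

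\textbf{Dual and conclusion.} The identical argument for $J(\alpha)$, using $A^{\top}-C_2^{\top}C_2Y$, yields $\alpha_2>0$ with $Y(\alpha)\ge0$. Setting $\alpha^*=\min(\alpha_1,\alpha_2)$ gives (C1) and (C2) for every $\alpha\in[0,\alpha^*]$.
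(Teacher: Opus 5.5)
Your proof is correct. Up to the positive-semidefiniteness step it follows the paper: LQR existence at $\alpha=0$, continuity of the stable invariant subspace, then shrinking the interval so that $A-B_2B_2^{\top}X(\alpha)$ stays stable (the paper's $\alpha_2$).

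The two arguments part ways at the last step. The paper rewrites the CARE as the Hamiltonian $M(\alpha)$ with $F=A(\alpha)=A-B_2B_2^{\top}X(\alpha)$ and $Q(\alpha)=C_1^{\top}C_1+XB_2B_2^{\top}X$. It then picks a uniform bound $\gamma_1>\sup_{\alpha}\|\mathsf{G}_\alpha\|_\infty$, shrinks further to $\alpha\le\gamma_1^{-2}$, and invokes the strict bounded real lemma (Theorem~\ref{thm2.3}). Uniqueness of stabilizing solutions then identifies $X(\alpha)$ with the lemma's $P\ge0$.

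You read the same identity, $A(\alpha)^{\top}X+XA(\alpha)=-C_1^{\top}C_1-X(B_2B_2^{\top}+\alpha B_1B_1^{\top})X$, directly as a Lyapunov equation. Its coefficient is stable and its right-hand side is negative semidefinite, so Theorem~\ref{thm2.2} gives $X(\alpha)\ge0$ at once.

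Your route is more elementary. It needs no frequency-domain estimate, no uniform resolvent bound, and no identification step. It also gives $X(\alpha)\ge0$ on the whole interval where $A-B_2B_2^{\top}X(\alpha)$ is stable. There the bound $\|\mathsf{G}_\alpha\|_\infty<\alpha^{-1/2}$ follows afterwards from the bounded real lemma instead of being imposed, so the paper's extra restriction $\alpha\le\gamma_1^{-2}$ is unnecessary. The paper's route buys only the $H_\infty$ interpretation of $M(\alpha)$; the lemma's conclusion $P\ge0$ itself rests on this Lyapunov argument.

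Two small points for a clean write-up. First, delete the exploratory lines involving $A_\alpha=W_X$ and the ``(sign check)'' placeholder, since the argument never uses them. Second, state explicitly that $X(\alpha)$ is symmetric, so that $S$ is symmetric as Theorem~\ref{thm2.2} requires. This follows from $X(\alpha)=\operatorname{Ric}(H(\alpha))$ via the Riccati-domain definition; the paper instead proves it separately by applying Theorem~\ref{thm2.2} with $F=W(\alpha)$.
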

\begin{proof}
We first prove (C1). Consider the equations
\begin{subequations}\label{eq3.10}
\begin{align}
&H(\alpha)\equiv
\begin{bmatrix}
A&\alpha B_1B_1^{\top}-B_2B_2^{\top}\\
-C_1^{\top}C_1&-A^{\top}
\end{bmatrix},\\
&H(\alpha)\begin{bmatrix}I\\X(\alpha)\end{bmatrix}
=\begin{bmatrix}I\\X(\alpha)\end{bmatrix}W(\alpha).
\end{align}
\end{subequations}
From the detectability of $(C_1,A)$ and the stabilizability of $(A,B_2)$, it follows from \cite{valo:1984} that $H(0)\in \operatorname{Dom}(\operatorname{Ric})$ and $X(0)^{\top}=X(0)={\rm Ric}(H(0))\ge 0$. Since $H(0)$ has no imaginary-axis eigenvalues, its stable invariant
subspace varies continuously for sufficiently small $\alpha$.
The upper block of a continuous basis is invertible at $\alpha=0$
and remains invertible nearby. Hence there is an $\alpha_1>0$
such that the continuous solution curve
\begin{align}\label{eq3.11}
    \{(X(\alpha),W(\alpha))\ |\ \alpha\in [0,\alpha_1]\}
\end{align}
of \eqref{eq3.10} exists with $\sigma(H(\alpha))\bigcap {\rm Im}=\emptyset$ and $W(\alpha)$ stable. To establish the symmetry of $X(\alpha)$, proceed as follows. Write $G(\alpha)\equiv B_2B_2^{\top}-\alpha B_1B_1^{\top}$. Then from \eqref{eq3.10} we have
\begin{equation*}\begin{aligned}
X(\alpha)\,&(A-G(\alpha)X(\alpha))+(A-G(\alpha)X(\alpha))^{\top}X(\alpha)\\
&=-X(\alpha)^{\top}G(\alpha)X(\alpha)-C_1^{\top}C_1.
\end{aligned}\end{equation*}
Since $\widehat A=A-G(\alpha)X(\alpha)=W(\alpha)$ is stable,
Theorem~\ref{thm2.2}, with $F=\widehat A$ and
$S=X(\alpha)^{\top}G(\alpha)X(\alpha)+C_1^{\top}C_1=S^{\top}$,
gives
\begin{equation*}\begin{aligned}
X(\alpha)
&=\int_0^{\infty}e^{\widehat A^{\top}t}
\left(X(\alpha)^{\top}G(\alpha)X(\alpha)+C_1^{\top}C_1\right)e^{\widehat A t}\,dt\\
&=X(\alpha)^{\top}.
\end{aligned}\end{equation*}

Let $A(\alpha)=A-B_2B_2^{\top}X(\alpha)$. Since $A(0)=W(0)$ is stable and $X(\alpha)=X(\alpha)^{\top}$ is continuous, we can choose $0<\alpha_2\le\alpha_1$ such that $A(\alpha)$ is stable and $X(\alpha)$ is bounded for $\alpha\in[0,\alpha_2]$. From \eqref{eq3.10} we have
\begin{subequations}\label{eq3.15}
\begin{align}
&Q(\alpha)\equiv C_1^{\top}C_1
 +X(\alpha)B_2B_2^{\top}X(\alpha),\\
&M(\alpha)\equiv\begin{bmatrix}
A(\alpha)&\alpha B_1B_1^{\top}\\
-Q(\alpha)&-A(\alpha)^{\top}
\end{bmatrix},\\
&M(\alpha)\begin{bmatrix}I\\X(\alpha)\end{bmatrix}
=\begin{bmatrix}I\\X(\alpha)\end{bmatrix}W(\alpha),
\end{align}
\end{subequations}
for $\alpha\in[0,\alpha_2]$. 
We define
\begin{equation}\label{eq3.16}
C(\alpha)=\begin{bmatrix}C_1\\B_2^{\top}X(\alpha)\end{bmatrix},\quad\mathsf{G}_\alpha=C(\alpha)(sI-A(\alpha))^{-1}B_1
\end{equation}
for $s=i\omega\in {\rm Im}$. On the compact interval $[0,\alpha_2]$,
$A(\alpha)$ and $C(\alpha)$ are continuous, and $A(\alpha)$ is stable.
The resolvent is therefore uniformly bounded on bounded frequency
intervals and decays uniformly as $|\omega|\to\infty$.
Consequently, we can choose a constant
\begin{equation}\label{eq3.18}
    \gamma_1>\sup_{\alpha\in[0,\alpha_2]}
    \|\mathsf{G}_{\alpha}\|_{\infty}.
\end{equation}
Set $\alpha_X^*=\min\{\alpha_2,\gamma_1^{-2}\}>0$.
For $0<\alpha\le\alpha_X^*$, $A(\alpha)$ is stable and
$\|\mathsf{G}_{\alpha}\|_{\infty}<\gamma_1\le\alpha^{-1/2}$.
Apply Theorem~\ref{thm2.3} with $r=n$, $F=A(\alpha)$,
$B=B_1$, $C=C(\alpha)$ and $\gamma=\alpha^{-1/2}$, so that
$H=M(\alpha)$ in \eqref{eq3.15}. Since $X(\alpha)$ already defines a
stable graph subspace of $M(\alpha)$, uniqueness of the stabilizing
Riccati solution identifies it with $P$ in the theorem and gives
$X(\alpha)\ge0$. The case $\alpha=0$ was established at the start of the proof.
Thus (C1) holds on $[0,\alpha_X^*]$.
Applying the same argument with
$(A,B_1,B_2,C_1)$ replaced by
$(A^{\top},C_1^{\top},C_2^{\top},B_1^{\top})$ gives
(C2) on $[0,\alpha_Y^*]$ for some $\alpha_Y^*>0$, using (A2).
Taking $\alpha^*=\min\{\alpha_X^*,\alpha_Y^*\}$ completes the proof.
\end{proof}

Building on Theorem~\ref{thm3.2}, we construct projected channels that
yield an explicit sufficient attenuation threshold. Let $B_1^{\rm o}$
and $C_1^{\rm o}$ denote the normalized frozen channels $B_1$ and $C_1$
of Section~2 before projection. We project the columns of $B_1^{\rm o}$
onto $\operatorname{range}(B_2)$ and the transposed rows of $C_1^{\rm o}$
onto $\operatorname{range}(C_2^{\top})$. Define
\begin{subequations}\label{eq3.19proj}
\begin{align}
    &\widetilde B_1=P_BB_1^{\rm o}=B_2K_B,\,
    P_B=B_2B_2^\dagger,\,K_B=B_2^\dagger B_1^{\rm o},\\
    &\widetilde C_1=C_1^{\rm o}P_C=K_CC_2,\,
    P_C=C_2^\dagger C_2,\,K_C=C_1^{\rm o}C_2^\dagger.
\end{align}
\end{subequations}
Here $\dagger$ denotes the Moore--Penrose inverse, and $P_B$ and
$P_C$ are the orthogonal projectors onto $\operatorname{range}(B_2)$ and
$\operatorname{range}(C_2^{\top})$, respectively. In the remainder of this
section, the generalized plant, the two CAREs, and the controller recovery
in Section~2 use $(B_1,C_1)=(\widetilde B_1,\widetilde C_1)$.
Sufficient channel thresholds are
\begin{subequations}\label{eq3.19}
    \begin{align}
        \gamma_b&=\|K_B\|_2=\|B_2^\dagger B_1^{\rm o}\|_2,\label{eq3.19a}\\
        \gamma_c&=\|K_C\|_2=\|C_1^{\rm o}C_2^\dagger\|_2.\label{eq3.19b}
    \end{align}
\end{subequations} 
We then choose a strict margin above
\begin{equation}\label{eq3.20}
    \gamma>\gamma^*=\max\{\gamma_b,\gamma_c\}>0,
\end{equation}
which gives the factorizations
\begin{subequations}\label{eq3.21}
\begin{align}
G_\gamma
&=B_2B_2^{\top}-\gamma^{-2}\widetilde B_1\widetilde B_1^{\top}\notag\\
&=B_2(I-\gamma^{-2}K_BK_B^{\top})B_2^{\top}\ge0,\\
H_\gamma
&=C_2^{\top}C_2-\gamma^{-2}\widetilde C_1^{\top}\widetilde C_1\notag\\
&=C_2^{\top}(I-\gamma^{-2}K_C^{\top}K_C)C_2\ge0.
\end{align}
\end{subequations}
The inequalities are strict on ${\rm range}(B_2)$ and ${\rm range}(C_2^{\top})$, respectively, because $\gamma>\gamma^*$.

\begin{Theorem}\label{thm3.3}
    Suppose that $\gamma>\gamma^*>0$ is chosen as in \eqref{eq3.19}--\eqref{eq3.20}, $(A,B_2)$ and $(A,\widetilde B_1)$ are stabilizable, and $(\widetilde C_1,A)$ and $(C_2,A)$ are detectable. Then the projected Hamiltonians $H_{\infty}(\gamma)$ and $J_{\infty}(\gamma)$ belong to $\operatorname{Dom}(\operatorname{Ric})$ and have stabilizing solutions
    $X(\gamma)=X(\gamma)^{\top}\ge0$ and $Y(\gamma)=Y(\gamma)^{\top}\ge0$, respectively.
\end{Theorem}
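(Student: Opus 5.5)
The plan is to reduce each projected CARE to a standard LQR-type Riccati equation with a positive-semidefinite quadratic term and a positive-semidefinite constant term, and then invoke the classical existence result for such equations~\cite{valo:1984}. This is the result already used at $\alpha=0$ in the proof of Theorem~\ref{thm3.2}. The factorizations \eqref{eq3.21} make this reduction possible.

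\emph{Step 1 (positive definite inner factors).} Set $S_B=I-\gamma^{-2}K_BK_B^{\top}$ and $S_C=I-\gamma^{-2}K_C^{\top}K_C$. Since $\|K_BK_B^{\top}\|_2=\gamma_b^2<\gamma^2$ and $\|K_C^{\top}K_C\|_2=\gamma_c^2<\gamma^2$ by \eqref{eq3.19}--\eqref{eq3.20}, both $S_B$ and $S_C$ are symmetric positive definite. Define
\begin{align*}
\widehat B_2=B_2S_B^{1/2},\qquad \widehat C_2=S_C^{1/2}C_2 .
\end{align*}
Then $G_\gamma=\widehat B_2\widehat B_2^{\top}$ and $H_\gamma=\widehat C_2^{\top}\widehat C_2$. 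The CARE \eqref{eq2.9b} becomes $A^{\top}X+XA-X\widehat B_2\widehat B_2^{\top}X+\widetilde C_1^{\top}\widetilde C_1=0$. The CARE \eqref{eq2.10b} becomes $AY+YA^{\top}-Y\widehat C_2^{\top}\widehat C_2Y+\widetilde B_1\widetilde B_1^{\top}=0$, which is the CARE for the dual data $(A^{\top},\widehat C_2^{\top},\widetilde B_1^{\top})$.

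\emph{Step 2 (transfer of stabilizability/detectability).} Because $S_B^{1/2}$ is nonsingular, $\operatorname{range}(\widehat B_2)=\operatorname{range}(B_2)$. Consequently the PBH test shows that $(A,\widehat B_2)$ is stabilizable iff $(A,B_2)$ is. Likewise $(\widehat C_2,A)$ is detectable iff $(C_2,A)$ is, i.e.\ $(A^{\top},\widehat C_2^{\top})$ is stabilizable. By hypothesis $(\widetilde C_1,A)$ is detectable. Stabilizability of $(A,\widetilde B_1)$ is equivalent to detectability of $(\widetilde B_1^{\top},A^{\top})$. Thus both equations satisfy the standard LQR hypotheses.

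\emph{Step 3 (conclusion via the classical theorem).} Apply~\cite{valo:1984} to each equation. This gives $H_\infty(\gamma),J_\infty(\gamma)\in\operatorname{Dom}(\operatorname{Ric})$ and stabilizing solutions $X(\gamma),Y(\gamma)\ge0$. Symmetry follows by the Lyapunov argument used in Theorem~\ref{thm3.2}, applying Theorem~\ref{thm2.2} to the stable matrices $W_X$ and $W_Y$.

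The step needing the most care is the absence of imaginary-axis eigenvalues, if one wants it self-contained rather than cited. I would argue directly: suppose $H_\infty(\gamma)[x;y]=i\omega[x;y]$ with $[x;y]\neq0$. Premultiplying the two block rows by $y^*$ and $x^*$ and taking real parts gives $\|\widehat B_2^{\top}y\|^2+\|\widetilde C_1x\|^2=0$. Hence $\widetilde C_1x=0$ and $\widehat B_2^{\top}y=0$, so $Ax=i\omega x$ and $A^{\top}y=-i\omega y$, i.e.\ $y^*A=i\omega\, y^*$. Detectability then forces $x=0$, and stabilizability forces $y=0$, a contradiction. Stable-subspace complementarity, i.e.\ invertibility of $U_1$, and $X\ge0$ then follow from the standard LQR argument, using a stabilizing gain and Theorem~\ref{thm2.2}.
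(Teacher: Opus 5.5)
Your proposal is correct and follows essentially the same route as the paper: both reduce the projected CAREs to standard positive-semidefinite Riccati theory by showing that $(A,G_\gamma^{1/2})$ is stabilizable and $(H_\gamma^{1/2},A)$ is detectable, and then invoke the classical existence result. The paper transfers these properties through the lower bounds $G_\gamma\ge\delta_B B_2B_2^{\top}$ and $H_\gamma\ge\delta_C C_2^{\top}C_2$ combined with a PBH test, whereas you use the exact factorizations $G_\gamma=\widehat B_2\widehat B_2^{\top}$ and $H_\gamma=\widehat C_2^{\top}\widehat C_2$ with nonsingular $S_B^{1/2}$ and $S_C^{1/2}$, which is an equivalent and slightly cleaner way to do the same transfer.
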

\begin{proof}
Let $\delta_B=1-\gamma^{-2}\|K_B\|_2^2>0$ and
$\delta_C=1-\gamma^{-2}\|K_C\|_2^2>0$. Equation~\eqref{eq3.21} yields
\begin{align*}
G_\gamma\ge\delta_BB_2B_2^{\top},\qquad
H_\gamma\ge\delta_CC_2^{\top}C_2.
\end{align*}
Let $q\in\mathbb{C}^n\setminus\{0\}$ satisfy $q^*A=\lambda q^*$ with
${\rm Re}\lambda\ge0$, where $*$ denotes the conjugate transpose.
Stabilizability of $(A,B_2)$ implies $B_2^{\top}q\ne0$, and hence
\begin{align*}
q^*G_\gamma q\ge\delta_B\|B_2^{\top}q\|_2^2>0.
\end{align*}
Thus $(A,G_\gamma^{1/2})$ is stabilizable. Together with detectability of $(\widetilde C_1,A)$, standard positive-semidefinite CARE theory gives the stabilizing solution of
\begin{align*}
A^{\top}X+XA-XG_\gamma X+\widetilde C_1^{\top}\widetilde C_1=0.
\end{align*}

For the dual equation, $H_\gamma\ge\delta_CC_2^{\top}C_2$ and detectability of $(C_2,A)$ imply detectability of $(H_\gamma^{1/2},A)$; stabilizability of $(A,\widetilde B_1)$ then gives the stabilizing solution of
\begin{align*}
AY+YA^{\top}-YH_\gamma Y+\widetilde B_1\widetilde B_1^{\top}=0.
\end{align*}

Both solutions are symmetric positive semidefinite~\cite{Hewer:1993}.
\end{proof}

The following consequence of classical Riccati monotonicity gives
an attenuation update for enforcing (C3).

\begin{Proposition}[Attenuation update]\label{prop:c3-update}
At a fixed frozen state, suppose that the assumptions of
Theorem~\ref{thm3.3} hold. For $\gamma>\gamma^*$, define
$\rho(\gamma)=\rho(X(\gamma)Y(\gamma))$ and
\begin{align}\label{eq:c3-ratio}
\chi(\gamma)=\gamma^{-2}\rho(\gamma).
\end{align}
Then $\chi$ is nonincreasing and tends to zero as $\gamma\to\infty$.
Given $\gamma_l>\gamma^*$, let $\rho_l=\rho(\gamma_l)$.
For $\tau\in(0,1)$ and $\eta>0$, the update
\begin{align}\label{eq:c3-update}
\gamma_{l+1}=(1+\eta)\max\left\{\gamma_l,
\sqrt{\frac{\rho_l}{1-\tau}}\right\}
\end{align}
satisfies $\chi(\gamma_{l+1})<1-\tau$ after recomputing the two
CARE solutions, in exact arithmetic.
\end{Proposition}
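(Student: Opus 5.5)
The plan is to reduce everything to monotonicity of the two stabilizing Riccati solutions in $\gamma$. Once that is in hand, both the behaviour of $\chi$ and the update rule follow by elementary estimates.

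\emph{Step 1: Riccati comparison.} Fix $\gamma^*<\gamma_1<\gamma_2$ and write $G_i=G_{\gamma_i}$, $X_i=X(\gamma_i)$, $\Delta=X_1-X_2$. By \eqref{eq3.21} and $\gamma_1>\gamma^*$ we have $G_2\ge G_1\ge0$, and $G_2-G_1=(\gamma_1^{-2}-\gamma_2^{-2})\widetilde B_1\widetilde B_1^{\top}\ge0$. Let $\mathcal R_i(X)=A^{\top}X+XA-XG_iX+\widetilde C_1^{\top}\widetilde C_1$, so that $\mathcal R_i(X_i)=0$. Two identities are needed:
\begin{align*}
\mathcal R_1(X_2)&=\mathcal R_2(X_2)+X_2(G_2-G_1)X_2=X_2(G_2-G_1)X_2\ge0,\\
\mathcal R_1(X_2)-\mathcal R_1(X_1)&=-(A_1^{\top}\Delta+\Delta A_1)-\Delta G_1\Delta,
\end{align*}
where $A_1=A-G_1X_1$ is stable by Theorem~\ref{thm3.3}. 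Combining them gives
\[
A_1^{\top}\Delta+\Delta A_1=-\bigl(X_2(G_2-G_1)X_2+\Delta G_1\Delta\bigr)\le0 .
\]
Theorem~\ref{thm2.2} then yields $\Delta\ge0$, that is, $X(\gamma_1)\ge X(\gamma_2)\ge0$. This is the classical monotonicity of \cite{Hewer:1993,lich:1993}. The identical argument applied to the dual CARE, with $H_\gamma$ in place of $G_\gamma$, gives $Y(\gamma_1)\ge Y(\gamma_2)\ge0$.

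\emph{Step 2: monotonicity of $\rho$ and of $\chi$.} For $X,Y\ge0$ we have $\rho(XY)=\lambda_{\max}(Y^{1/2}XY^{1/2})$. If $X_1\ge X_2\ge0$ and $Y_1\ge Y_2\ge0$, then
\[
\lambda_{\max}(Y_2^{1/2}X_2Y_2^{1/2})\le\lambda_{\max}(Y_2^{1/2}X_1Y_2^{1/2})=\lambda_{\max}(X_1^{1/2}Y_2X_1^{1/2})\le\lambda_{\max}(X_1^{1/2}Y_1X_1^{1/2}),
\]
so $\rho(\gamma)$ is nonincreasing. Since $\gamma^{-2}$ is decreasing, $\chi$ is nonincreasing. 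For the limit, fix any $\gamma_0>\gamma^*$; for $\gamma\ge\gamma_0$ we get $0\le\chi(\gamma)\le\gamma^{-2}\rho(\gamma_0)\to0$.

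\emph{Step 3: the update.} By construction $\gamma_{l+1}\ge(1+\eta)\gamma_l>\gamma_l$, so Step~2 gives $\rho(\gamma_{l+1})\le\rho_l$. If $\rho_l=0$, then $\chi(\gamma_{l+1})=0<1-\tau$. Otherwise $\gamma_{l+1}^2\ge(1+\eta)^2\rho_l/(1-\tau)>\rho_l/(1-\tau)$, and therefore
\[
\chi(\gamma_{l+1})\le\frac{\rho_l}{\gamma_{l+1}^2}<1-\tau .
\]
Here $X(\gamma_{l+1})$ and $Y(\gamma_{l+1})$ exist by Theorem~\ref{thm3.3}, because $\gamma_{l+1}>\gamma^*$.

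The main obstacle is Step~1. Linearizing the difference naively around $X_2$ produces an indefinite right-hand side. The remedy is to linearize $\mathcal R_1$ around $X_1$, using the stable closed-loop matrix $A_1$ together with $G_1\ge0$; the latter is precisely where $\gamma>\gamma^*$ enters.
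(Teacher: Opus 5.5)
Your proposal is correct and follows the same structure as the paper's proof. Monotonicity of $\rho(\gamma)$ makes $\chi$ nonincreasing, the bound $\chi(\gamma)\le\rho(\gamma_0)/\gamma^2$ gives the limit, and $\gamma_{l+1}>\gamma_l$, $\gamma_{l+1}^2>\rho_l/(1-\tau)$ and $\rho(\gamma_{l+1})\le\rho_l$ give the margin. The one difference is that the paper only cites classical Riccati monotonicity \cite{Hewer:1993,lich:1993} for $\rho(\gamma_2)\le\rho(\gamma_1)$, whereas you prove it yourself, which makes the argument self-contained. You use the Lyapunov comparison around the stabilizing $X(\gamma_1)$, valid here because the projection gives $G_\gamma,H_\gamma\ge0$, together with the congruence argument based on $\rho(XY)=\lambda_{\max}(Y^{1/2}XY^{1/2})$.
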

\begin{proof}
Classical Riccati monotonicity gives
$0\le\rho(\gamma_2)\le\rho(\gamma_1)$ for
$\gamma_2\ge\gamma_1>\gamma^*$~\cite{Hewer:1993,lich:1993}.
Thus $\chi$ is nonincreasing and, for any fixed $\gamma_0>\gamma^*$,
$0\le\chi(\gamma)\le\rho(\gamma_0)/\gamma^2\to0$ as $\gamma\to\infty$.
Moreover, \eqref{eq:c3-update} gives
$\gamma_{l+1}>\gamma_l$ and
$\gamma_{l+1}^2>\rho_l/(1-\tau)$, while monotonicity gives
$\rho(\gamma_{l+1})\le\rho_l$. The claimed margin follows.
\end{proof}

We first apply SDA to the projected Hamiltonian for $X(\gamma)$:
\begin{align*}
 H=H_\infty(\gamma)=\begin{bmatrix}A&-G\\-Q&-A^{\top}\end{bmatrix}
 \in\mathbb{R}^{2n\times2n},
\end{align*}
where $G=G_\gamma\ge0$ and $Q=\widetilde C_1^{\top}\widetilde C_1\ge0$.
Applying the Cayley transformation
$(H-\mu I_{2n})^{-1}(H+\mu I_{2n})$, with $\mu>0$ chosen so that
$H-\mu I_{2n}$ and $A-\mu I_n$ are nonsingular, gives the
corresponding symplectic pair \cite{lixu:2006,hull:2018} as
    \begin{subequations}\label{eq3.22}
    \begin{align}
        \mathcal{M}=\left[\begin{array}{cc}
            A_0 & 0 \\
            - H_0 & I
        \end{array}\right],\ \mathcal{L}&=\left[\begin{array}{cc}
            I & G_0 \\
            0 & A_0^{\top}
        \end{array}\right],\label{eq3.22a}\\
        \mathcal{M}J\mathcal{M}^{\top}
        &=\mathcal{L}J\mathcal{L}^{\top}.\label{eq3.22b}
    \end{align}
    \end{subequations}
    Here $J=\begin{bmatrix}0&I_n\\-I_n&0\end{bmatrix}\in\mathbb{R}^{2n\times2n}$,
    $A_\mu=A-\mu I_n$, and
    \begin{align*}
    A_0&=I_n+2\mu(A_\mu+GA_\mu^{-\top}Q)^{-1},\\*
    G_0&=2\mu A_\mu^{-1}G
    (A_\mu^{\top}+QA_\mu^{-1}G)^{-1},\\*
    H_0&=2\mu(A_\mu^{\top}+QA_\mu^{-1}G)^{-1}
    QA_\mu^{-1}.
    \end{align*}
    For $G,Q\ge0$, nonsingularity of $A_\mu$ also ensures that
    $A_\mu^{\top}+QA_\mu^{-1}G$ is nonsingular.
    Subject to these nonsingularity conditions, a practical shift can be
    selected using the condition-number-based Fibonacci search of
    \citet[Section~3.1]{chfl:2005}. Optimal shifts based on spectral
    enclosure regions are studied by \citet{hull:2017}.
    The pencil $(\mathcal{M},\mathcal{L})$ and $H$ share the stable invariant subspace ${\rm span}([I,X]^{\top})$. Since $\sigma (H)\bigcap {\rm Im}=\emptyset$, the pencil has no unimodular eigenvalues. Starting from $(A_0,G_0,H_0)$, with $G_0=G_0^{\top}\ge0$ and $H_0=H_0^{\top}\ge0$, we perform the SDA algorithm \cite{hull:2018,lixu:2006} with quadratic convergence as
    \begin{align*}
        A_{j+1}&=A_{j}(I+G_{j}H_{j})^{-1}A_{j}\rightarrow 0,\\
        H_{j+1}&=H_{j}+A_{j}^{\top}(I+H_{j}G_{j})^{-1}H_{j}A_{j}\rightarrow X\ge 0,\\
        G_{j+1}&=G_{j}+A_{j}G_{j}(I+H_{j}G_{j})^{-1}A_{j}^{\top}\rightarrow Z.
    \end{align*}
    Here $Z$ is the stabilizing Riccati solution associated with the dual
    Hamiltonian $\left[\begin{array}{cc}
        A^{\top} &-Q  \\
        -G & -A
    \end{array}\right]$, whose stable invariant subspace is
    ${\rm span}([I,Z]^{\top})$.

Thus $X=X(\gamma)={\rm Ric}(H_{\infty}(\gamma))\ge0$.
To obtain $Y(\gamma)$, apply the same iteration with $(A,G,Q)$ replaced by
$(A^{\top},H_\gamma,\widetilde B_1\widetilde B_1^{\top})$.

Alternatively, consider the CARE $F^{\top}U+UF-UGU+Q=0$,
where $F,G,Q\in\mathbb{R}^{n\times n}$ and $G,Q$ are symmetric.
For $U=X$, take $(F,G,Q)=(A,G_\gamma,\widetilde C_1^{\top}\widetilde C_1)$;
for $U=Y$, take $(F,G,Q)=(A^{\top},H_\gamma,\widetilde B_1\widetilde B_1^{\top})$.
Starting from a symmetric $U_0$ with $F-GU_0$ stable, the
Newton--Kleinman iteration~\cite{klei:1968} solves
\begin{align}\label{eq:newton-lyapunov}
 L_j^{\top}U_{j+1}+U_{j+1}L_j+Q_j=0,
\end{align}
where $L_j=F-GU_j$ and $Q_j=Q+U_jGU_j$.
For stable $L_j$, this Lyapunov equation has a unique solution by
Theorem~\ref{thm2.2}. At each Newton step, choose $\mu>0$ and initialize
\begin{align*}
 E_0&=(L_j-\mu I_n)^{-1}(L_j+\mu I_n),\\
 P_0&=2\mu(L_j-\mu I_n)^{-\top}Q_j(L_j-\mu I_n)^{-1}.
\end{align*}
The Cayley transformation gives the equivalent Stein equation
$U_{j+1}=E_0^{\top}U_{j+1}E_0+P_0$, which is solved by
\begin{align}\label{eq:squared-smith}
 E_{l+1}=E_l^2,\;
 P_{l+1}=P_l+E_l^{\top}P_l E_l
 \to U_{j+1}.
\end{align}
This is the squared Smith iteration~\cite{smit:1968,hull:2018},
the specialization of SDA to a Lyapunov equation.
Stability of $L_j$ ensures $\rho(E_0)<1$ and convergence of the inner
iteration. The warm start and stabilizing safeguard are described in
Remark~\ref{rem3.1}.

Algorithm~\ref{alg:comp_EK} summarizes the computation of a controller satisfying (C1)--(C3) and the recovery of the controller matrices in \eqref{eq2.12}.

\begin{algorithm}[tbp]
\begin{algorithmic}[1]
\REQUIRE The current frozen matrices $A,B_1^{\rm o},B_2,C_1^{\rm o},C_2$ at synthesis instant $t_k$; (C3) margin $\tau\in(0,1)$, strict factor $\kappa>1$, update factor $\eta>0$, maximum number of attenuation updates $l_{\max}\ge0$; optional accepted values $(\gamma_{k-1},X_{k-1},Y_{k-1})$.
\ENSURE Success/failure status; on success, controller matrices $(\widehat{A}_0,\widehat{B}_0,\widehat{C}_0)$ for the projected realization of \eqref{eq2.3} and accepted values $(\gamma_k,X_k,Y_k)$ for reuse at the next sampling instant.
    \STATE Form $\widetilde B_1=B_2B_2^\dagger B_1^{\rm o}$ and
    $\widetilde C_1=C_1^{\rm o}C_2^\dagger C_2$;
    \STATE Compute $(\gamma_b,\gamma_c)$ by \eqref{eq3.19}. At the first state set $\gamma\leftarrow\kappa\max\{\gamma_b,\gamma_c\}$; thereafter set $\gamma\leftarrow\max\{\gamma_{k-1},\kappa\gamma_b,\kappa\gamma_c\}$;
    \STATE Form $G_\gamma,H_\gamma$ by \eqref{eq3.21} and compute $X(\gamma),Y(\gamma)$ for the CAREs
    \begin{align*}
        &A^{\top}X+XA-XG_\gamma X+\widetilde C_1^{\top}\widetilde C_1=0,\\
        &AY+YA^{\top}-YH_\gamma Y+\widetilde B_1\widetilde B_1^{\top}=0,
    \end{align*}
    respectively, using the selected solver settings and the Newton initialization in Remark~\ref{rem3.1}; return failure if either solve fails;
    \STATE Compute $\rho(\gamma) = \rho(X(\gamma) Y(\gamma))$ and set $l=0$;
    \WHILE{$\rho(\gamma)/\gamma^2 > 1-\tau$ and $l<l_{\max}$}
    \STATE Set $\gamma\leftarrow(1+\eta)\max\{\gamma,\sqrt{\rho(\gamma)/(1-\tau)}\}$
    \STATE Recompute $G_\gamma,H_\gamma$ by \eqref{eq3.21} and solve the two CAREs above with the selected solver settings and the Newton safeguard in Remark~\ref{rem3.1}; return failure if either solve fails;
    \STATE Compute $\rho(\gamma) = \rho(X(\gamma) Y(\gamma))$ and
    set $l=l+1$;
    \ENDWHILE
    \STATE Return failure if the (C3) margin is not met;
    \STATE Compute $\widehat{B}_0=(I-\gamma^{-2}YX)^{-1}YC_2^{\top}$;
    \STATE Set $\widehat{A}_0=W_X-\widehat{B}_0C_2$ and
    $\widehat{C}_0=-B_2^{\top}X$, where $W_X=A-G_\gamma X$.
    \STATE Set $(\gamma_k,X_k,Y_k)\leftarrow(\gamma,X,Y)$ and return success with $(\widehat{A}_0,\widehat{B}_0,\widehat{C}_0,\gamma_k,X_k,Y_k)$.
\end{algorithmic}
\caption{Computation of $H_\infty$-controller $\mathsf{K}=(\widehat{A}_0,\widehat{B}_0,\widehat{C}_0)$ for the projected realization of \eqref{eq2.3}}
\label{alg:comp_EK}
\end{algorithm}

\begin{Remark}\label{rem3.1}

At sampling instant $k$, the implementation initializes $\gamma_k$ from the accepted value $\gamma_{k-1}$ and reuses $X_{k-1}$ and $Y_{k-1}$ as Newton starting values. At the first sampling instant, or whenever a previous solution is unavailable or is not stabilizing for the current CARE at the selected $\gamma$, Newton is initialized with a stabilizing \textbf{icare} solution.

\end{Remark}
\section{Kalman state estimation}\label{sec:kalman-estimation}

The preceding state-dependent controller synthesis requires a state
estimate at which its coefficient matrices are evaluated. We obtain this
estimate from sampled measurements using a discrete Kalman filter.
At time $t_k$, let $\hat\bx_k=\hat\bx_{k|k}\in\mathbb R^n$ and
$P_k=P_{k|k}\in\mathbb R^{n\times n}$ denote the posterior state estimate
and its covariance. Initialize them with given $\hat\bx_0$ and
$P_0=P_0^\top\ge0$.

For an SDC model frozen at $\hat\bx_k$, write
$A_k=A(\hat\bx_k)$ and $B_{2,k}=B_2(\hat\bx_k)$, and let
$\bb_k$ be a known affine term that may include a prescribed nominal
input. Over a sampling interval of length $h>0$, set
$M_k=I_n-\tfrac h2A_k$. With $M_k$ nonsingular, trapezoidal
discretization gives
\begin{align}\label{eq4.3}
 F_k=M_k^{-1}(I_n+\tfrac h2A_k),
\end{align}
and
\begin{align*}
 B_{2,d,k}=hM_k^{-1}B_{2,k},\qquad
 \bd_k=hM_k^{-1}\bb_k.
\end{align*}
Let $\bu_k$ denote the known effective input over this interval. With
measurement matrix $C_2\in\mathbb R^{p_2\times n}$, the discrete model is
\begin{align*}
 \bx_{k+1}&=F_k\bx_k+\bd_k+B_{2,d,k}\bu_k+\boldsymbol\varepsilon_k,\\*
 \by_k&=C_2\bx_k+\boldsymbol\nu_k,
\end{align*}
where $\boldsymbol\varepsilon_k$ and $\boldsymbol\nu_k$ are independent
zero-mean white noise sequences with covariances $W_k\ge0$ and $V>0$.
For continuous process-noise intensity $W_c\ge0$, the corresponding
trapezoidal approximation is
\begin{align*}
 W_k=hM_k^{-1}W_cM_k^{-\top}.
\end{align*}

The prediction and correction recursion~\cite{simo:2006} is
\begin{subequations}\label{eq4.4}
\begin{align}
 \hat\bx_{k+1|k}&=F_k\hat\bx_k+\bd_k+B_{2,d,k}\bu_k,\\
 P_{k+1|k}&=F_kP_kF_k^\top+W_k,\\
 \mathcal K_{k+1}&=P_{k+1|k}C_2^\top
 (C_2P_{k+1|k}C_2^\top+V)^{-1},\\
 \hat\bx_{k+1}&=\hat\bx_{k+1|k}
 +\mathcal K_{k+1}(\by_{k+1}-C_2\hat\bx_{k+1|k}),\\
 P_{k+1}&=(I_n-\mathcal K_{k+1}C_2)P_{k+1|k}
 (I_n-\mathcal K_{k+1}C_2)^\top\notag\\*
 &\quad+\mathcal K_{k+1}V\mathcal K_{k+1}^\top.
\end{align}
\end{subequations}
Here $P_{k+1|k}$ is the prior covariance and $\mathcal K_{k+1}$ is
the Kalman gain. The covariance and gain are updated recursively,
avoiding an additional estimator CARE.

\section{Applications in aerial vehicle design}

We apply the control framework developed in Sections~2--4 to an F-16
aircraft and a quadrotor. The aircraft examples address recovery to steady
flight and speed tracking; the quadrotor examples address position tracking.

\subsection{F-16 model for regulation and speed tracking}\label{sec:f16-model}
The F-16 is controlled through engine thrust and the deflections of the
elevator, aileron, and rudder. To describe its motion, define
\begin{align}
 \mathbf r&=[x,y,z]^\top,&
 \mathbf v_b&=[u,v,w]^\top,\notag\\
 \boldsymbol\omega&=[p,q,r]^\top,&
 \boldsymbol\alpha&=[\phi,\theta,\psi]^\top.\label{eq6.1}
\end{align}
Here $\mathbf r$ is the position of the aircraft center of mass in the
earth frame; its vertical coordinate $z$ is altitude. The vector $\mathbf v_b$ is its velocity relative to the
earth frame, expressed in the body frame, and $\boldsymbol\omega$ is
the angular velocity of the body relative to the earth frame, expressed
in the body frame. The attitude vector $\boldsymbol\alpha$ consists of
the roll angle $\phi$, pitch angle $\theta$, and yaw angle $\psi$,
which describe the body orientation relative to the earth frame.
The twelve-state vector and the actuator inputs actually applied to the
aircraft are
\begin{equation}\label{eq:f16-state-input}
 \bx=[\mathbf r^\top,\boldsymbol\alpha^\top,\mathbf v_b^\top,\boldsymbol\omega^\top]^\top,\;
 \bu_{\mathrm p}=[T,\delta_e,\delta_a,\delta_r]^\top.
\end{equation}
Here $\bu_{\mathrm p}$ collects the physical actuator settings: $T$ is
the thrust in newtons, and $\delta_e$, $\delta_a$, and
$\delta_r$ are the elevator, aileron, and rudder deflection angles,
respectively, in radians. Positions and body velocities use metres and
metres per second, and attitude angles and angular rates use radians
and radians per second.

\begin{figure}[tbp]
 \centering
 \includegraphics[width=0.94\textwidth]{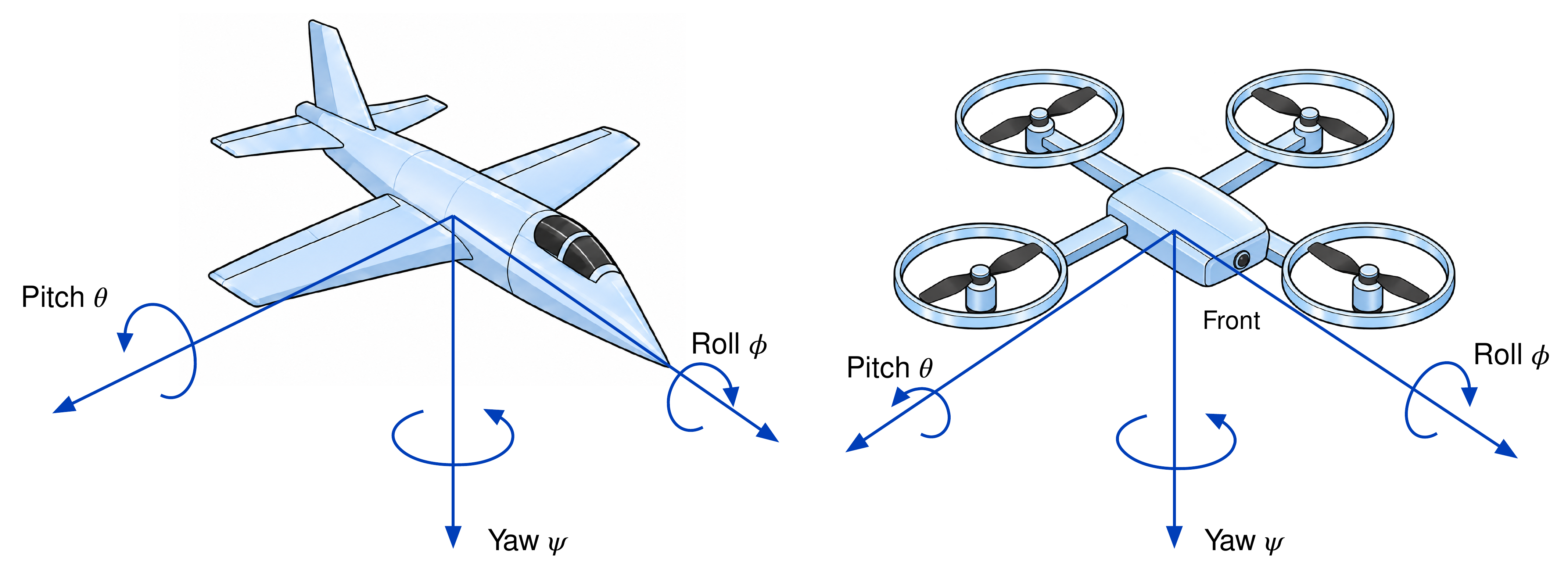}
 \caption{Schematic roll, pitch, and yaw conventions for the F-16 aircraft
 (left) and quadrotor (right) examples. AI-generated illustration.}
 \label{fig6.1}
\end{figure}

The elementary rotations are
\begin{subequations}\label{eq6.2}
\begin{align}
 \mathcal R_x(\phi)&=\begin{bmatrix}1&0&0\\0&\cos\phi&-\sin\phi\\0&\sin\phi&\cos\phi\end{bmatrix},\\
 \mathcal R_y(\theta)&=\begin{bmatrix}\cos\theta&0&\sin\theta\\0&1&0\\-\sin\theta&0&\cos\theta\end{bmatrix},\\
 \mathcal R_z(\psi)&=\begin{bmatrix}\cos\psi&-\sin\psi&0\\\sin\psi&\cos\psi&0\\0&0&1\end{bmatrix}.
\end{align}
\end{subequations}
Write $\mathcal R=\mathcal R_z\mathcal R_y\mathcal R_x$. The navigation
equation is $\dot{\mathbf r}=\mathcal R\mathbf v_b$, and the Euler-angle
kinematics are
\begin{subequations}\label{eq6.5}
\begin{align}
 \dot\phi&=p+q\sin\phi\tan\theta+r\cos\phi\tan\theta,\\
 \dot\theta&=q\cos\phi-r\sin\phi,\\
 \dot\psi&=(q\sin\phi+r\cos\phi)\sec\theta.\label{eq:dynamic_psi}
\end{align}
\end{subequations}
Let $m$ be the aircraft mass, $\mathcal I$ its body-frame inertia tensor,
and $\be_1,\be_3$ the first and third coordinate unit vectors. With
aerodynamic force $\mathbf F_a$ and moment $\mathbf M_a$, and external
force and moment disturbances $\mathbf f_w$ and $\boldsymbol\tau_w$,
the Newton--Euler equations are
\begin{align}\label{eq6.6}
 m(\dot{\mathbf v}_b+\boldsymbol\omega\times\mathbf v_b)
 &=-mg\mathcal R^\top\be_3+\mathbf F_a+T\be_1+\mathbf f_w,\notag\\*
 \mathcal I\dot{\boldsymbol\omega}
 +\boldsymbol\omega\times\mathcal I\boldsymbol\omega
 &=\mathbf M_a+\boldsymbol\tau_w.
\end{align}
The aerodynamic force $\mathbf F_a$ and moment $\mathbf M_a$ are
computed using \texttt{F16Model.jl}~\cite{isrlab:F16Model}.

Together, these equations define the nonlinear plant
\begin{align}\label{eq6.12}
 \dot\bx=f(\bx,\bu_{\mathrm p})+E\bw_{\mathrm p},\qquad
 \bw_{\mathrm p}=[\mathbf f_w^\top,\boldsymbol\tau_w^\top]^\top.
\end{align}
Here $f$ collects the navigation, attitude, and force and moment equations
above. The matrix $E$ maps the external forces to translational
accelerations and the external moments to angular accelerations; its
blocks are given in Appendix~\ref{app:model-matrices}.

The control objective is to follow a desired motion $\bx_d(t)$, which
contains the desired position, attitude, body velocity, and angular
velocity in the same order as $\bx$. The feedforward input
$\bu_{\mathrm f}(t)$ is the thrust and set of surface deflections that
produce this motion in the absence of disturbances. Thus,
\begin{align}\label{eq:f16-moving-reference}
 \dot\bx_d(t)=f(\bx_d(t),\bu_{\mathrm f}(t)).
\end{align}
For regulation, the reference is steady straight-level flight, obtained
by balancing the forces and moments. For speed tracking, the thrust
varies about this trim while the surface deflections remain fixed:
\begin{align}\label{eq:f16-moving-feedforward}
 \bu_{\mathrm f}(t)
 =[T_\star+A_T\sin(\Omega_T t_k),\delta_{e,\star},
                  \delta_{a,\star},\delta_{r,\star}]^\top.
\end{align}
The starred quantities are the trim inputs, $A_T$ is the thrust-modulation
amplitude, and $\Omega_T$ is its frequency. With sampling period $h$, this
input is evaluated at $t_k=kh$ and held over $[t_k,t_{k+1})$ within the
actuator bounds. Integrating \eqref{eq:f16-moving-reference} with this
input gives the desired state trajectory.

Feedback corrects deviations of the aircraft from $\bx_d$ by adjusting
the thrust and surface deflections relative to $\bu_{\mathrm f}$.
Because these state and input components have different units and
magnitudes, positive diagonal matrices $D_x$ and $D_u$ set their
componentwise scales. The scaled tracking error is
\begin{align*}
 \be=D_x^{-1}(\bx-\bx_d).
\end{align*}
The controller output $\bu$ is a dimensionless feedback increment, so $D_u\bu$
is the corresponding adjustment to the four physical actuator inputs.
The resulting command is
\begin{align*}
 \bu_{\mathrm{cmd}}=\operatorname{sat}_{[\bu_{\min},\bu_{\max}]}
                 (\bu_{\mathrm f}+D_u\bu).
\end{align*}
Here $\operatorname{sat}$ clips each component between the lower and
upper physical actuator bounds $\bu_{\min}$ and $\bu_{\max}$,
listed in Appendix~\ref{app:model-matrices}.
With instantaneous updates, the aircraft receives
$\bu_{\mathrm p}=\bu_{\mathrm{cmd}}$; while a new command is being
computed, it retains the previous complete command. The actual applied
correction, expressed in the same scales, is
\begin{align*}
 \bu_a=D_u^{-1}(\bu_{\mathrm p}-\bu_{\mathrm f}).
\end{align*}
Similarly, a positive diagonal matrix $D_w$ sets the force and moment
disturbance scales, giving
\begin{align*}
 \bw_d=D_w^{-1}\bw_{\mathrm p},\qquad
 B_{1d}^{\rm o}=D_x^{-1}ED_w.
\end{align*}
The numerical scales are specified in Appendix~\ref{app:model-matrices}.

To express the tracking problem in the SDC form of Sections~2--3,
subtract the reference dynamics from the aircraft dynamics. The
$12\times12$ matrix $A_e(\bx,t)$ represents the change in the drift
caused by the state error:
\begin{align}\label{eq:f16-error-dynamics}
 A_e(\bx,t)\be
 =D_x^{-1}\!\left[f(\bx,\bu_{\mathrm f})-f(\bx_d,\bu_{\mathrm f})\right].
\end{align}
Its construction preserves the navigation and angular-rate blocks and
the invariance under horizontal translations. The input matrix describes
the local response to changes in thrust and surface deflections:
\begin{align}\label{eq:f16-input-tangent}
 B_2(\bx,t)=D_x^{-1}f_{\bu_{\mathrm p}}(\bx,\bu_{\mathrm f})D_u.
\end{align}
Here $f_{\bu_{\mathrm p}}$ is the derivative of $f$ with respect to the
four actuator inputs, evaluated at the feedforward input. The resulting
error dynamics are
\begin{align}\label{eq:f16-moving-error}
 \dot\be&=A_e\be+B_2\bu_a+B_{1d}^{\rm o}\bw_d
              +\boldsymbol\rho_u,\notag\\*
 \boldsymbol\rho_u
 &=D_x^{-1}\!\left[f(\bx,\bu_{\mathrm p})-f(\bx,\bu_{\mathrm f})\right]
      -B_2\bu_a.
\end{align}
The term $\boldsymbol\rho_u$ accounts for the nonlinear dependence on
the actuator inputs. Aircraft motion is propagated using the nonlinear
model \eqref{eq6.12}.

The measurements $\by$ contain the nine position, Euler-angle, and
body-velocity components, selected by $C=[I_9\ \ 0]$. Let
$\boldsymbol\sigma_y$ contain their measurement-noise standard deviations.
Scaling each measurement error by its standard deviation gives
\begin{align*}
 D_y&=\operatorname{diag}(\boldsymbol\sigma_y)^{-1},\qquad C_2=D_yCD_x,\\*
 \by_e&=D_y(\by-C\bx_d)=C_2\be+\bw_n,
\end{align*}
where $\bw_n$ is the scaled measurement noise. The performance output
before projection is $C_1^{\rm o}\be+D_{12}\bu$: its state weights act
on position, attitude, and velocity errors, and its control cost acts on
the feedback increment. These matrices are given in
Appendix~\ref{app:model-matrices}.

The six process disturbances and nine measurement-noise components form
the exogenous input $\bw=[\bw_d^\top,\bw_n^\top]^\top$ of Section~2.
Following the projection in Section~3, set
$\widetilde B_{1d}=P_BB_{1d}^{\rm o}$ and
$\widetilde C_1=C_1^{\rm o}P_C$. The complete channels are
\begin{align*}
 B_1^{\rm o}&=[B_{1d}^{\rm o},0_{12\times9}],\\*
 \widetilde B_1&=[\widetilde B_{1d},0_{12\times9}],\qquad
 D_{21}=[0_{9\times6},I_9].
\end{align*}
At synthesis instant $t_k$, let $\hat\bx_k$ be the Kalman posterior
converted back to physical state coordinates. This estimate and the
reference data determine $A=A_e(\hat\bx_k,t_k)$ and the other
coefficient matrices.
The state and measurement $(\bx,\by)$ in Section~2 correspond to
$(\be,\by_e)$ here. The two CAREs and controller recovery use
$(B_1,C_1)=(\widetilde B_1,\widetilde C_1)$ and retain all twelve states.
The central controller has internal state $\xi$, driven by the measured
tracking error, and produces the feedback increment introduced above:
\begin{align}\label{eq:f16-physical-controller}
 \dot\xi&=\widehat A_0\xi+\widehat B_0\by_e,\qquad
 \bu=\widehat C_0\xi.
\end{align}

\begin{samepage}
\subsection{Quadrotor model for position tracking}
We next consider tracking a three-dimensional spiral with the quadrotor
rigid-body model of~\cite{ChHu:2022}. The vectors $\boldsymbol\alpha$,
$\boldsymbol\omega$, $\mathbf v_b$, and $\mathbf r$ have the physical
meanings defined in Section~\ref{sec:f16-model}; here the vertical
coordinate $z$ increases in the direction of gravity. The state order
and applied actuator inputs are
\begin{equation}\label{eq6.15}
 \bx=[\boldsymbol\alpha^\top,\boldsymbol\omega^\top,\mathbf v_b^\top,\mathbf r^\top]^\top,\;
 \bu_{\mathrm p}=[T,\tau_x,\tau_y,\tau_z]^\top.
\end{equation}
\end{samepage}
Here $T$ is the total thrust in newtons, acting along the negative body
$z$-axis, and $\boldsymbol\tau=[\tau_x,\tau_y,\tau_z]^\top$ is the
applied body torque in newton metres.
With $\be_3=[0,0,1]^{\top}$, the body-frame Newton--Euler equations are
\begin{subequations}\label{eq6.16}
\begin{align}
 \dot{\mathbf r}&=\mathcal R\mathbf v_b,\label{eq6.16a}\\*
 m(\dot{\mathbf v}_b+\boldsymbol{\omega}\times\mathbf v_b)
  &=mg\mathcal R^{\top}\be_3-T\be_3+\mathbf f_w,\label{eq6.16b}\\*
 \mathcal I\dot{\boldsymbol{\omega}}
  +\boldsymbol{\omega}\times\mathcal I\boldsymbol{\omega}
  &=\boldsymbol{\tau}+\boldsymbol{\tau}_w,\label{eq6.16c}
\end{align}
\end{subequations}
where $\mathcal R=\mathcal R_z(\psi)\mathcal R_y(\theta)
\mathcal R_x(\phi)$ is given by \eqref{eq6.2} and
$\mathcal I=\operatorname{diag}(I_x,I_y,I_z)$.  The same 3--2--1
kinematic convention as in \eqref{eq6.5} gives
\begin{align}\label{eq6.17}
 \dot{\boldsymbol{\alpha}}
 =\begin{bmatrix}
 1&\sin\phi\tan\theta&\cos\phi\tan\theta\\
 0&\cos\phi&-\sin\phi\\
 0&\sin\phi\sec\theta&\cos\phi\sec\theta
 \end{bmatrix}\boldsymbol{\omega}.
\end{align}
To simplify the rotational equations, define the inertia coefficients
\begin{align*}
&c_1=(I_y-I_z)/I_x,&&c_2=(I_z-I_x)/I_y,\\*
&c_3=(I_x-I_y)/I_z,&&c_4=1/I_x,\\*
&c_5=1/I_y,&&c_6=1/I_z.
\end{align*}
Then expanding \eqref{eq6.16b}--\eqref{eq6.16c} gives
\begin{subequations}\label{eq6.18}
\begin{align}
&\dot p=c_1qr+c_4(\tau_x+\tau_{w,x}),\\
&\dot q=c_2pr+c_5(\tau_y+\tau_{w,y}),\\
&\dot r=c_3pq+c_6(\tau_z+\tau_{w,z}),\label{eq6.18b}\\
&-mg\sin\theta+f_{w,x}=m(\dot u+qw-rv),\\
&mg\sin\phi\cos\theta+f_{w,y}=m(\dot v-pw+ru),\\
&mg\cos\phi\cos\theta-T+f_{w,z}=m(\dot w+pv-qu).
\end{align}
\end{subequations}
The force and moment disturbances form
$\bw_{\mathrm p}=[\mathbf f_w^\top,\boldsymbol\tau_w^\top]^\top$.

For a prescribed reference position $\mathbf r_d$, attitude
$\mathcal R_d$, and body angular velocity $\boldsymbol\omega_d$,
the feedforward thrust and torque satisfy
\begin{align*}
 T_d\mathcal R_d\be_3&=m(g\be_3-\ddot{\mathbf r}_d),\\*
 \boldsymbol\tau_d&=\mathcal I\dot{\boldsymbol\omega}_d
 +\boldsymbol\omega_d\times\mathcal I\boldsymbol\omega_d.
\end{align*}
The reference body velocity is
$\mathbf v_{b,d}=\mathcal R_d^\top\dot{\mathbf r}_d$, and its angular
kinematics follow \eqref{eq6.17}. We use the error and input definitions
of Section~\ref{sec:f16-model}, with
$D_x=I_{12}$, $D_u=I_4$, $D_w=I_6$, and $D_y=I_9$. Thus
$\be=\bx-\bx_d$, $\bw_d=\bw_{\mathrm p}$, and the command is
\begin{align}\label{eq:quad-input-split}
 \bu_{\mathrm{cmd}}(t)&=\bu_{\mathrm f}(t)+\bu(t),\\*
 \bu_{\mathrm f}(t)&=[T_d(t),\boldsymbol\tau_d(t)^\top]^\top.\notag
\end{align}
The feedback increment $\bu$ adjusts the thrust and torques needed for
the reference motion. The applied correction is
$\bu_a=\bu_{\mathrm p}-\bu_{\mathrm f}$, with the same command update
and hold rule as for the F-16. Instantaneous updates give $\bu_a=\bu$.

\begin{samepage}
For the state order in \eqref{eq6.15}, separating the constant gravity
term and the feedforward input yields the affine SDC model
\begin{align}\label{eq6.19}
 \dot\bx=A(\bx)\bx+\bb_{\mathrm f}(t)+B_{1d}^{\rm o}\bw_d+B_2\bu_a,
\end{align}
where
\begin{align*}
 \bb_{\mathrm f}(t)&=g\be_9+B_2\bu_{\mathrm f}(t),\\*
 \be_9&=[0,0,0,0,0,0,0,0,1,0,0,0]^\top.
\end{align*}
\end{samepage}
In the filter of Section~\ref{sec:kalman-estimation}, $\bb_k=g\be_9$
and $\bu_k$ is the effective value of the actual physical input
$\bu_{\mathrm p}$ over the sampling interval. The known forcing can
equivalently be written as $\bb_{\mathrm f}+B_2\bu_a$.
Here $A(\bx)\bx+g\be_9$ is the uncontrolled drift in
\eqref{eq6.16}--\eqref{eq6.18}. At hover,
$\bu_{\mathrm f}=[mg,0,0,0]^\top$ and $\bb_{\mathrm f}=0$.
Subtracting the reference rate gives
\begin{align*}
 \dot\be&=A(\bx)\be+B_2\bu_a+B_{1d}^{\rm o}\bw_d
                  +\boldsymbol\delta_{\mathrm{tr}}(\bx,t),\\*
 \boldsymbol\delta_{\mathrm{tr}}(\bx,t)
          &=A(\bx)\bx_d+\bb_{\mathrm f}(t)-\dot\bx_d.
\end{align*}
For an exact nominal solution, this remainder equals
$[A(\bx)-A(\bx_d)]\bx_d$.

The measured components are attitude, body angular velocity, and
position. Since $D_x$ and $D_y$ are identities, $C_2=C$ and
$\by_e=\by-C\bx_d$. Synthesis follows the F-16 construction with
$A_e(\bx,t)=A(\bx)$ and the quadrotor disturbance, performance, and
measurement matrices, including the same completion and projection of
the disturbance channel. The controller is
\begin{align}\label{eq:quad-feedback-increment}
 \dot\xi&=\widehat A_0\xi+\widehat B_0\by_e,\qquad
 \bu=\widehat C_0\xi.
\end{align}
The coefficient matrices are evaluated at the Kalman posterior state.
The performance output is $\bz=\widetilde C_1\be+D_{12}\bu$, with
nonzero position-error weights.

The physical parameters, matrices $A(\bx)$, $B_{1d}^{\rm o}$, $B_2$,
performance and measurement channels, and projected channels used in the two CAREs are
given in Appendix~\ref{app:model-matrices}.
\section{Numerical experiments}\label{sec:numerical-experiments}

The experiments compare \texttt{icare}, SDA, and Newton on F-16
regulation and speed tracking, and nominal and faster quadrotor spiral
tracking. Within each test, the solvers share the model, reference,
initial conditions, and noise samples. Computations use MATLAB R2023b
Update~1 on an Apple M2 Pro with 16~GiB memory and a single computational
thread.

Regulation and nominal tracking use instantaneous control updates.
The additional tracking tests simulate limited onboard computation with
a common resource model based on measured complete-task times. Tasks run sequentially: during computation,
the previous complete physical input, including feedforward, is held;
the new command is applied upon completion.
In Figures~\ref{fig:f16-moving-tracking} and~\ref{fig:quad-resource},
``Zero-delay'' denotes SDA with instantaneous control updates.

\subsection{F-16 regulation and speed tracking}\label{sec:f16-experiments}

\subsubsection{Recovery to straight-level flight}\label{sec:aircraft-regulation}
The reference describes flight at a constant speed of $183.3$ m/s
and altitude $3048$ m.
Solving all six force and moment balances gives the trim state and input.
Its nonzero body vertical velocity is consistent with zero climb rate
through $\dot{\mathbf r}_d=\mathcal R_d\mathbf v_{b,d}$.
The plant starts from this trim plus
\begin{align*}
 \Delta\bx_0=(&0,0,0,2^\circ,1^\circ,0,\\
             &2,0.4,-0.4,0.01,-0.004,0.002)^\top .
\end{align*}
The estimator starts at trim and $\xi(0)=0$. The scales, noise
statistics, and covariance initialization are specified in
Appendix~\ref{app:model-matrices}.

The experiment covers $30$ s with sensor period $h=0.005$ s.
The nonlinear plant and filter mean use RK4 substeps of at most
$0.0025$ s, and the dynamic controller uses the trapezoidal update.
The complete feedforward-plus-feedback input is constrained by the
source actuator bounds. Full-state NRMSE is the root mean square of the
twelve scaled error components $D_x^{-1}(\bx-\bx_d)$ over all $6001$
state samples, including the initial sample. The airspeed error is
$\|\mathbf v_b\|_2-\|\mathbf v_{b,d}\|_2$.

Figure~\ref{fig:aircraft-regulation} shows the airspeed, altitude, and
pitch errors during recovery. The three solver trajectories overlap
at plotting resolution. Table~\ref{tab:f16-regulation} summarizes the
physical response. The initial transient includes
thrust saturation, after which the aircraft approaches the moving
straight-level reference.

\begin{table}[tbp]
 \centering
 \caption{F-16 straight-level recovery with instantaneous control updates.}
 \label{tab:f16-regulation}
 \small
 \begin{tabular}{lr}
  \toprule
  Metric & Value\\
  \midrule
  Position RMSE (m) & 3.3335\\
  Airspeed RMSE (m/s) & 0.4862\\
  Full-state NRMSE & 0.08730\\
  Maximum altitude deviation (m) & 5.8163\\
  Final position error norm (m) & 0.2171\\
  Final altitude error (m) & 0.002541\\
  Final climb rate (m/s) & $-0.000565$\\
  Final airspeed error (m/s) & $-0.021475$\\
  \bottomrule
 \end{tabular}
\end{table}

\begin{figure}[tbp]
 \centering
 \includegraphics[width=\textwidth]{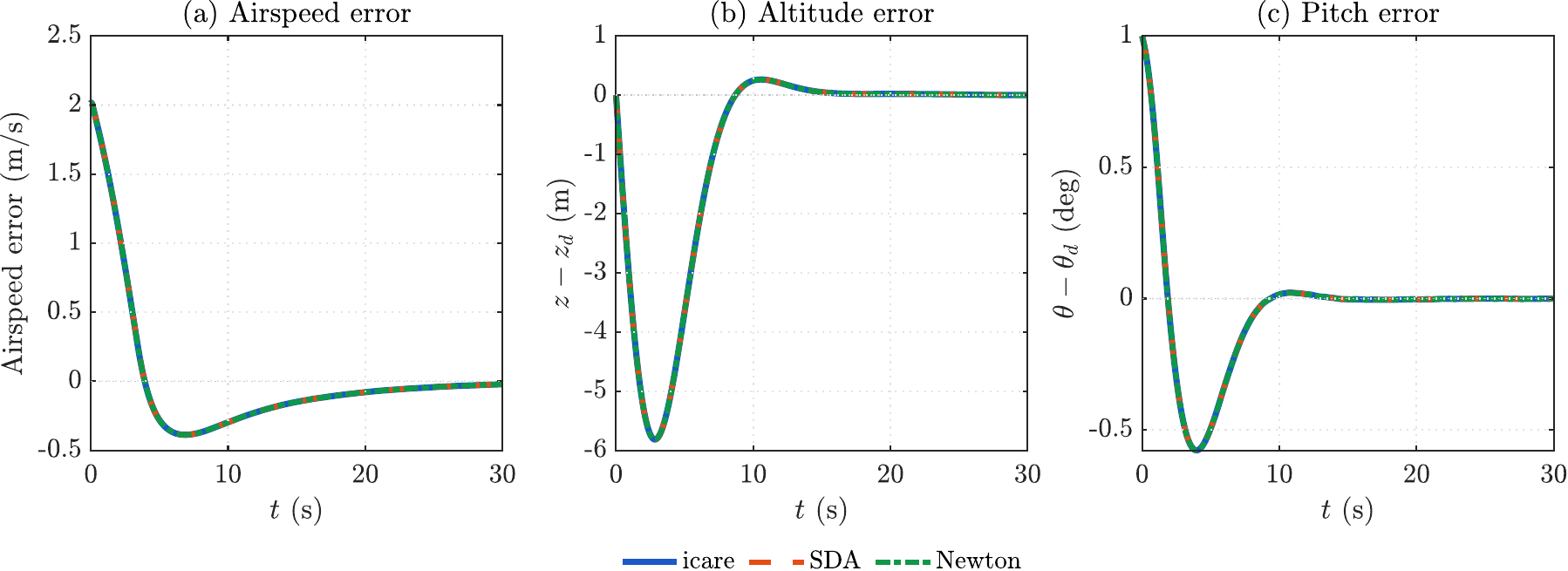}
 \caption{F-16 recovery to straight-level flight.}
 \label{fig:aircraft-regulation}
\end{figure}

\subsubsection{Speed tracking with measured computation delays}
\label{sec:f16-speed-tracking}
The reference is generated offline from
\eqref{eq:f16-moving-reference}--\eqref{eq:f16-moving-feedforward},
starting from straight-level trim, with $A_T=4500$ N and
$\Omega_T=\pi$ rad/s. The three surface inputs remain at trim.
The complete twelve-state reference follows the position, attitude,
velocity, and rate evolution induced by the sampled thrust modulation.

The experiment covers $20$ s with sensor period $h=0.005$ s.
The plant and estimator start on the reference and $\xi(0)=0$;
state scales and noise statistics are shared with the regulation test.
The nonlinear plant and filter mean use RK4 substeps of at most
$0.0025$ s, split at command-publication times.
For prediction to the next synthesis time, the covariance uses the
frozen plant Jacobian computed at the preceding synthesis instant,
and the controller state uses a trapezoidal update.

The airspeed RMSE compares $\|\mathbf v_b\|_2$ with
$\|\mathbf v_{b,d}\|_2$ over all $4001$ state samples, including the
initial sample. Pitch errors use the same moving reference.
Table~\ref{tab:f16-moving-tracking} and
Figure~\ref{fig:f16-moving-tracking} report one paired run.
All methods complete all six paired runs. Relative to \texttt{icare},
SDA and Newton reduce airspeed RMSE by $11.6$--$18.7\%$ and
$8.5$--$19.9\%$, respectively. Both methods also reduce full-horizon
pitch RMSE in every pair.

\begin{table}[tbp]
 \centering
 \caption{F-16 speed-tracking errors with computation delays.}
 \label{tab:f16-moving-tracking}
 \small
 \begin{tabular}{lrr}
  \toprule
  Solver & Airspeed RMSE & Pitch RMSE\\
  & (mm/s) & ($10^{-3}$ deg)\\
  \midrule
  \texttt{icare} & 76.1561 & 2.6582\\
  SDA & 63.3754 & 1.4236\\
  Newton & 64.9919 & 1.2831\\
  \bottomrule
 \end{tabular}
\end{table}

\begin{figure}[tbp]
 \centering
 \begin{subfigure}[t]{0.48\textwidth}
  \centering
  \includegraphics[width=\linewidth]{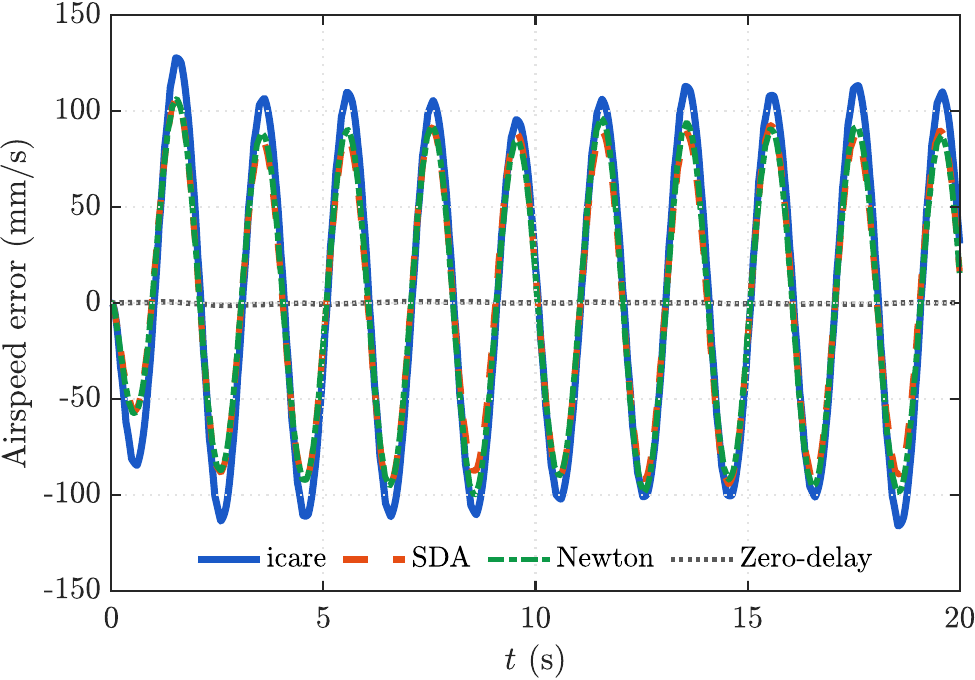}
  \caption{Airspeed error.}
 \end{subfigure}\hfill
 \begin{subfigure}[t]{0.48\textwidth}
  \centering
  \includegraphics[width=\linewidth]{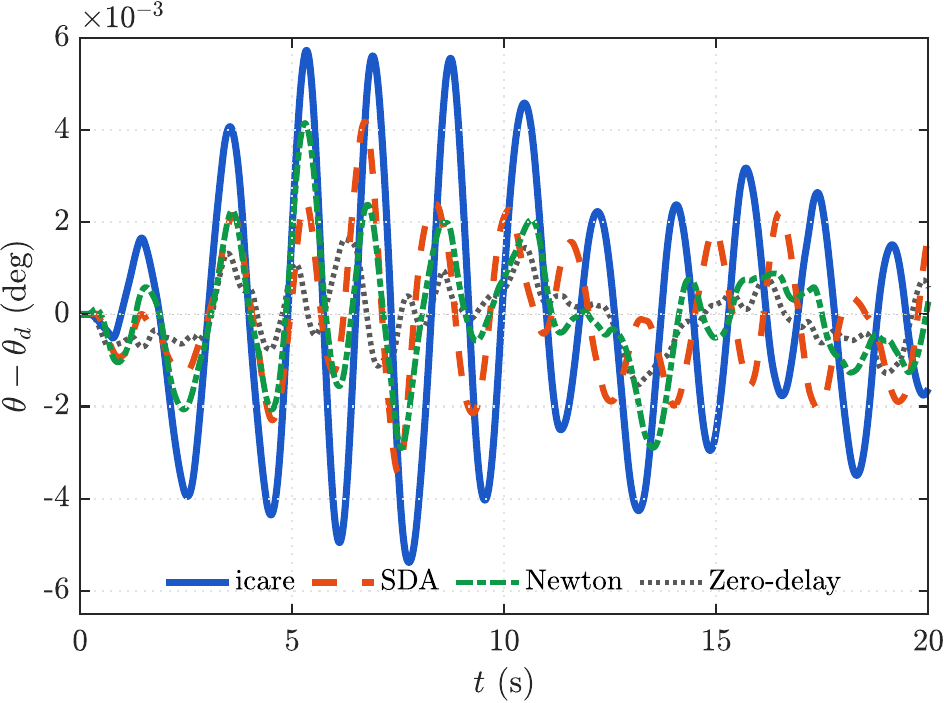}
  \caption{Pitch error.}
 \end{subfigure}
 \caption{F-16 speed tracking with computation delays.}
 \label{fig:f16-moving-tracking}
\end{figure}

\subsection{Quadrotor position tracking}\label{sec:quadrotor-experiments}

\subsubsection{Nominal spiral tracking}\label{sec:quadrotor-baseline}
\begin{figure}[tbp]
 \centering
 \includegraphics[width=0.76\textwidth]{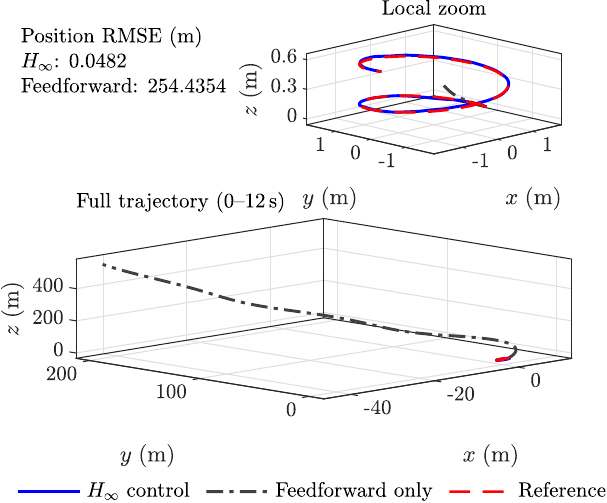}
 \caption{Quadrotor spiral tracking with instantaneous control updates.}
 \label{fig:quadrotor-baseline}
\end{figure}
The quadrotor tracks a spiral of radius 1.5~m, angular rate $0.25\pi$~rad/s, and vertical-coordinate rate 0.05~m/s for 12~s with $h=0.002$~s. The reference has zero yaw; its attitude, body velocity, and angular rates are constructed consistently with the prescribed position trajectory. The feedforward input in \eqref{eq:quad-input-split} is computed in advance, using sampled numerical derivatives for the reference angular motion, and enters the known model term $\bb_{\mathrm{f}}(t)$. The nine components $[\phi,\theta,\psi,p,q,r,x,y,z]$ are measured with noise, while the Kalman filter reconstructs the full twelve-state vector. All three solvers give the same position RMSE of \QuadrotorPositionRMSE~m, evaluated against the reference at the same post-update sample times. The maximum entrywise difference over all pairs of solver trajectories is $\QuadrotorMaxTrajectoryDifference$.

A paired feedforward-only run sets $\bu=0$ in \eqref{eq:quad-input-split}, giving the physical input $\bu_{\mathrm{p}}=\bu_{\mathrm{f}}$. The model, feedforward input, initial state, reference, Euler step, and process-disturbance samples are identical to those of the controlled run. Both runs start with zero attitude and angular rates, which differ from the moving reference state. Over the same 6000 post-update samples spanning 12~s, the three-dimensional position RMSE is \QuadFeedforwardRMSE~m for feedforward only, compared with \QuadrotorPositionRMSE~m with the dynamic $H_\infty$ feedback and Kalman estimator. Figure~\ref{fig:quadrotor-baseline} shows the full reference, controlled (Newton), and feedforward-only trajectories, with a local enlargement of the tracking region.

\subsubsection{Faster spiral tracking under limited computational resources}
\label{sec:quad-resource}
\begin{figure}[tbp]
 \centering
 \begin{subfigure}[t]{0.48\textwidth}
  \centering
  \includegraphics[width=\linewidth]{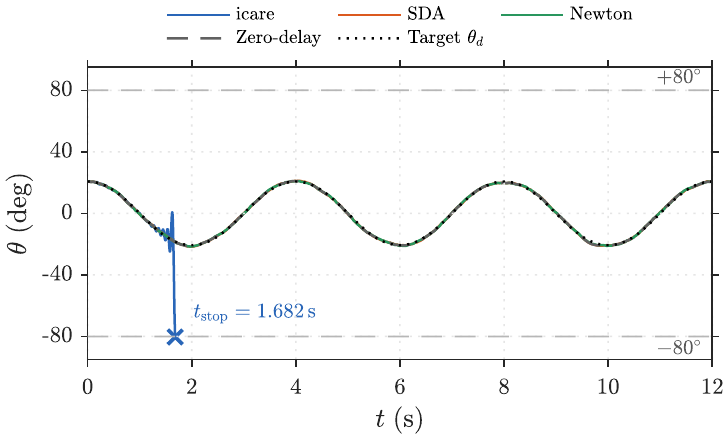}
  \caption{Pitch response.}
 \end{subfigure}\hfill
 \begin{subfigure}[t]{0.48\textwidth}
  \centering
  \includegraphics[width=\linewidth]{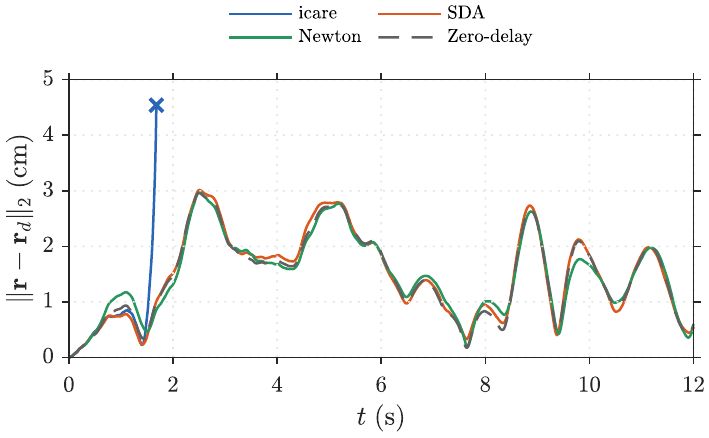}
  \caption{Position-error norm.}
 \end{subfigure}
 \caption{Quadrotor tracking under limited onboard computation. Crosses indicate \texttt{icare} termination.}
 \label{fig:quad-resource}
\end{figure}
The preceding example compares the three solvers with instantaneous control updates.
We now increase the angular rate of the spiral reference and simulate limited
onboard computation to examine how solver computation time affects tracking
performance. The reference is
\begin{equation}
 \mathbf r_d(t)=
 \begin{bmatrix}
  1.5\cos(\pi t/2)\\1.5\sin(\pi t/2)\\0.05t
 \end{bmatrix}\mathrm{m},\qquad 0\le t\le12~\mathrm{s}.
 \label{eq:quad-resource-reference}
\end{equation}
It completes three turns, twice the angular rate of the nominal example.
Positive $z$ follows gravity, and the yaw reference is zero. The remaining
reference states and feedforward input are constructed from this motion.

The sensor period is $h=1$~ms, and RK4 plant substeps are at most
$0.25$~ms, split at command publications. The controller retains its
Euler update and the estimator its discrete Kalman recursion. The plant
state and estimate start at the complete moving reference state,
$\bx(0)=\widehat\bx(0)=\bx_d(0)$, with $\xi(0)=0$, $P_0=I_{12}$,
and initial physical input $\bu_{\mathrm f}(0)$.
Synthesis weights, attenuation settings, and noise specifications are
unchanged; paired noise arrays are generated on the present sensor grid.
For completed runs, position RMSE is
\begin{equation}
 E_r=\left(\frac{1}{N}\sum_{k=1}^{N}
  \|\mathbf r(t_k)-\mathbf r_d(t_k)\|_2^2\right)^{1/2},
 \qquad N=12000.
 \label{eq:quad-resource-rmse}
\end{equation}

In the representative run, Figure~\ref{fig:quad-resource} shows
SDA and Newton following the reference while \texttt{icare} reaches
the predefined pitch stopping boundary $|\theta|>80^\circ$.
The attitude response reveals the loss of tracking before a large
position error develops.

This contrast persists across all six paired groups in
Table~\ref{tab:quad-resource-repeats}: SDA and Newton complete every
run, while \texttt{icare} stops early. The experimental configuration
is held fixed across these comparisons.

\begin{table}[tbp]
 \centering
 \caption{Quadrotor tracking under limited onboard computation.}
 \label{tab:quad-resource-repeats}
 \small
 \setlength{\tabcolsep}{4pt}
 \begin{tabular}{@{}lccc@{}}
  \toprule
  \multirow{2}{*}{Run} & \multirow{2}{*}{\shortstack{\texttt{icare}\\Stop time (s)}} & \multicolumn{2}{c}{Position RMSE (cm)}\\
  \cmidrule(l){3-4}
  & & SDA & Newton\\
  \midrule
  1 & $1.682^{\mathrm P}$ & 1.677 & 1.640\\
  2 & $3.485^{\mathrm P}$ & 1.607 & 1.683\\
  3 & $3.176^{\mathrm P}$ & 1.726 & 1.640\\
  4     & $0.541^{\mathrm P}$ & 1.503 & 1.651\\
  5     & $1.552^{\mathrm P}$ & 1.636 & 1.643\\
  6     & $5.939^{\mathrm R}$ & 1.396 & 1.335\\
  \bottomrule
 \end{tabular}
 \par\smallskip
 \begin{minipage}{\linewidth}
  \footnotesize
  P: pitch-envelope crossing; R: CARE residual acceptance failure.
 \end{minipage}
\end{table}

\subsection{Riccati computation time}\label{sec:care-timing}

For each vehicle model, the three solvers are evaluated on the same
sequence of states and corresponding controller CAREs. The timing covers
both CARE solves at each update. Table~\ref{tab:timing-results}
reports the CPU time per update and the speedup relative to
\texttt{icare}.

SDA and Newton require substantially less CPU time than \texttt{icare}
for both models. Consecutive states yield related CAREs, and Newton
reuses the preceding accepted solutions as initial guesses. SDA applies
structure-preserving doubling to each CARE, while Newton uses doubling
for its inner Lyapunov equations.

\begin{table}[htbp]
    \centering
    \caption{CPU time and speedup per update for the two controller CAREs.}
    \label{tab:timing-results}
    {
    \input{generated/paper_timing_table.tex}
    }
\end{table}

\section{Conclusions}

This paper presented an SDRE workflow based on projected disturbance and performance channels, explicit large-$\gamma$ feasibility, repeated SDA or safeguarded Newton--Kleinman CARE solves, and Kalman state estimation. Classical Riccati monotonicity supports the attenuation update enforcing (C3), and the stable invariant-subspace construction connects the projected CAREs to the central $H_\infty$ controller. The aircraft recovery and nominal quadrotor experiments give essentially solver-independent accuracy with instantaneous command updates. With measured computation delays, SDA and Newton reduce airspeed and pitch errors in all six paired aircraft tracking runs. In the faster quadrotor simulation with limited computational resources, SDA and Newton complete all six paired runs while runs using \texttt{icare} terminate early.

\appendix
\section{Parameters and coefficient matrices for aerial-vehicle models}
\label{app:model-matrices}

\paragraph{F-16 aircraft}
Use the state order in \eqref{eq:f16-state-input}, with four
three-dimensional blocks. The mass and gravitational acceleration are
\begin{align*}
 m&=9295.440537\ {\rm kg},\qquad g=9.805416\ {\rm m\,s^{-2}}.
\end{align*}
The inertia in $\mathrm{kg\,m^2}$ is
{\setlength{\arraycolsep}{2pt}
\begin{align*}
 \mathcal I&=
 \begin{bmatrix}
 12874.847237&0&1331.413225\\
 0&75673.622968&0\\
 1331.413225&0&85552.112540
 \end{bmatrix}.
\end{align*}
}
The lower and upper input bounds $\bu_{\min}$ and $\bu_{\max}$ follow
the source actuator limits: thrust $1000$--$19000$ lbf, elevator
$\pm25^\circ$, aileron $\pm21.5^\circ$, and rudder $\pm30^\circ$.
Thrust and surface deflections are converted to newtons and radians,
respectively.
The scales and unprojected synthesis channels are
\begin{equation*}
\begin{aligned}
 D_x=\operatorname{diag}\bigl(&20,20,10,0.1,0.1,0.1,\\
                            &5,5,5,0.1,0.1,0.1\bigr),\\
 D_u=\operatorname{diag}\bigl(&10^4,0.05,0.05,0.05\bigr).
\end{aligned}
\end{equation*}
\begin{align*}
 E&=\begin{bmatrix}
 0_{6\times3}&0_{6\times3}\\
 m^{-1}I_3&0\\
 0&\mathcal I^{-1}
 \end{bmatrix},\qquad B_{1d}^{\rm o}=D_x^{-1}E D_w,\\
 D_w&=100I_6,\qquad \bw_d=D_w^{-1}\bw_{\mathrm p},\\
 C_x&=\operatorname{diag}(I_9,0_{3\times3}),\\
 C_1^{\rm o}&=\begin{bmatrix}C_x\\0_{4\times12}\end{bmatrix},\qquad
 D_{12}=\begin{bmatrix}0_{12\times4}\\I_4\end{bmatrix},\\
 C_2&=D_y C D_x,\qquad C=[I_9\ \ 0_{9\times3}].
\end{align*}
Each entry of $D_x$ and $D_u$ carries the physical unit of its component.
The first and last three disturbance scales in $D_w$ represent
$100$ N and $100$ N\,m, respectively. The performance output before
projection is $C_1^{\rm o}\be+D_{12}\bu$.
The projectors in \eqref{eq3.19proj} are computed from the current
$B_2$ and $C_2$; in particular $P_C=\operatorname{diag}(I_9,0_3)$.

For clarity, partition $\be$ into position, attitude, velocity, and
rate blocks with corresponding diagonal scales $D_r,D_\alpha,D_v,D_\omega$.
Let $\mathcal T(\boldsymbol\alpha)$ be the Euler map in \eqref{eq6.5}
and $\mathcal T_d=\mathcal T(\boldsymbol\alpha_d)$ its reference value.
The upper six rows of the structured drift matrix are
\begin{align*}
 A_e(1\!:\!6,:)=
 \begin{bmatrix}
 0&A_{r\alpha}&D_r^{-1}\mathcal R D_v&0\\
 0&A_{\alpha\alpha}&0&D_\alpha^{-1}\mathcal T D_\omega
 \end{bmatrix},
\end{align*}
where
\begin{align*}
 A_{r\alpha}\be_\alpha
 &=D_r^{-1}(\mathcal R-\mathcal R_d)\mathbf v_{b,d},\\
 A_{\alpha\alpha}\be_\alpha
 &=D_\alpha^{-1}(\mathcal T-\mathcal T_d)\boldsymbol\omega_d.
\end{align*}
The attitude blocks are obtained by averaging forward and reverse
coordinate divided differences. For the lower six rows, set
\begin{equation*}
 D_{v\omega}=\operatorname{diag}(D_v,D_\omega),\;
 \mathcal F(\boldsymbol\zeta)=D_{v\omega}^{-1}f_{7:12}(\boldsymbol\zeta,\bu_{\mathrm f}).
\end{equation*}
Construct $A_e$ separately for the coordinate orders $(3,\ldots,12)$
and $(12,\ldots,3)$. In each order, start from $\bx_d$ and successively
replace coordinates by their values in $\bx$. Let $\boldsymbol\zeta^-$
and $\boldsymbol\zeta^+$ be the states before and after replacing
coordinate $j$, respectively. The corresponding column is
\begin{align*}
 (A_e)_{7:12,j}
 =\frac{\mathcal F(\boldsymbol\zeta^+)
              -\mathcal F(\boldsymbol\zeta^-)}{e_j}.
\end{align*}
Average the matrices from the two orders. Columns for horizontal position remain
zero. At scaled coordinate differences below $10^{-7}$, centered
tangents use a scaled perturbation $2\times10^{-5}$.
The remaining secant residual is corrected within the attitude columns
for the kinematic rows and columns $3{:}12$ for the dynamic rows.
Denoting the selected row and column sets by $\mathcal J_r,\mathcal J_c$, respectively,
\begin{align*}
 (A_e)_{\mathcal J_r,\mathcal J_c}
 &\ \leftarrow\ (A_e)_{\mathcal J_r,\mathcal J_c}
 +\frac{\boldsymbol\eta_{\mathcal J_r}\be_{\mathcal J_c}^{\top}}
        {\|\be_{\mathcal J_c}\|_2^2},\\
 \boldsymbol\eta&=D_x^{-1}[f(\bx,\bu_{\mathrm f})-f(\bx_d,\bu_{\mathrm f})]-A_e\be.
\end{align*}
The update is applied for nonzero selected error and residual above
the floating-point roundoff threshold. This yields
\eqref{eq:f16-error-dynamics} to evaluation precision while preserving
the physical kinematic blocks. The same scaled perturbation is used
for the four centered input differences in \eqref{eq:f16-input-tangent}.

For the nine sensors, the physical standard deviations are
\begin{equation*}
\begin{aligned}
 \boldsymbol\sigma_y
  =(&0.03,0.03,0.03,\\
    &0.01^\circ,0.01^\circ,0.01^\circ,0.01,0.01,0.01)^\top .
\end{aligned}
\end{equation*}
The angular entries are converted to radians.
The filter uses coordinates $\bar\bx=D_x^{-1}\bx$ and scales each
measurement by the corresponding state scale. Its observation matrix
is $C=[I_9\ \ 0]$, used in place of $C_2$ in the correction equations of
Section~\ref{sec:kalman-estimation}. With
$\mathbf s_x=\operatorname{diag}(D_x)$, its measurement covariance is
\begin{align*}
 V=\operatorname{diag}(\boldsymbol\sigma_y./\mathbf s_{x,1:9})^2.
\end{align*}
Let $\mathbf q$ contain the componentwise process-noise intensity scales
and $\mathbf p_0$ the initial estimation-error standard deviations used
to initialize the filter covariance, both in physical coordinates. The covariance intensity and initialization are
\begin{align*}
 W_c&=\operatorname{diag}(\mathbf q./\mathbf s_x)^2,\qquad
 P_0=\operatorname{diag}(\mathbf p_0./\mathbf s_x)^2,
\end{align*}
where
\begin{align*}
 \mathbf q=(&0.01,0.01,0.01,0.005,0.005,0.005,\\*
            &0.1,0.1,0.1,0.02,0.02,0.02)^\top,\\
 \mathbf p_0=(&0.03,0.03,0.03,0.1^\circ,0.1^\circ,0.1^\circ,\\*
              &0.05,0.05,0.05,0.01,0.01,0.01)^\top .
\end{align*}
Here $./$ denotes componentwise division and angular values are expressed
in radians. State prediction follows the nonlinear dynamics. Covariance
propagation uses a local linearization with the discretization in
\eqref{eq4.3}, followed by the correction in \eqref{eq4.4}.
Physical disturbance samples have standard deviations of $10$ N for
each body-force component and $1$ N\,m for each moment component.
They are held on the sensor grid and are independent of measurement noise.

\paragraph{Quadrotor}
The numerical model uses $m=1\,\mathrm{kg}$, $g=9.8\,\mathrm{m\cdot s^{-2}}$,
\begin{align*}
I_x=I_y=0.01466\,\mathrm{kg\cdot m^2},\quad
I_z=0.02848\,\mathrm{kg\cdot m^2}.
\end{align*}
Partition the state in \eqref{eq6.15} into four three-dimensional blocks and
define
\begin{align*}
[\boldsymbol{\omega}]_\times&=
\begin{bmatrix}0&-r&q\\r&0&-p\\-q&p&0\end{bmatrix},\\
\mathcal T(\phi,\theta)&=
\begin{bmatrix}
1&\sin\phi\tan\theta&\cos\phi\tan\theta\\
0&\cos\phi&-\sin\phi\\
0&\sin\phi\sec\theta&\cos\phi\sec\theta
\end{bmatrix},\\
A_\omega(\boldsymbol{\omega})&=
\begin{bmatrix}
0&0&c_1q\\
0&0&c_2p\\
0&c_3p&0
\end{bmatrix}.
\end{align*}
The gravity factor $G(\phi,\theta)=[g_{ij}]\in\mathbb R^{3\times3}$
has the nonzero entries
\begin{align*}
&g_{12}=-\frac{g\sin\theta}{\theta},\quad
g_{21}=\frac{g\cos\theta\sin\phi}{\phi},\\
&g_{31}=-2g\,\phi^{-1}\sin^2(\phi/2)\times\cos^2(\theta/2),\\
&g_{32}=-2g\,\theta^{-1}\sin^2(\theta/2)\times\cos^2(\phi/2).
\end{align*}
These trigonometric quotients are
evaluated by continuous extension at zero.
The complete SDC matrix in \eqref{eq6.19} is then
\begin{align*}
A(\bx)=
\begin{bmatrix}
0&\mathcal T(\phi,\theta)&0&0\\
0&A_\omega(\boldsymbol{\omega})&0&0\\
G(\phi,\theta)&0&-[\boldsymbol{\omega}]_\times&0\\
0&0&\mathcal R(\phi,\theta,\psi)&0
\end{bmatrix},
\end{align*}
where every zero denotes a $3\times3$ block. The uncontrolled drift in
\eqref{eq6.17}--\eqref{eq6.18} is $A(\bx)\bx+g\be_9$. The prescribed term
$B_2\bu_{\mathrm{f}}(t)$ completes $\bb_{\mathrm{f}}(t)$ in
\eqref{eq6.19}.

Let $C_\tau=\operatorname{diag}(c_4,c_5,c_6)=\mathcal I^{-1}$.  With disturbance
ordering $\bw_{\mathrm p}=\bw_d=[\mathbf f_w^{\top},\boldsymbol{\tau}_w^{\top}]^{\top}$
and physical input ordering $\bu_{\mathrm{p}}=[T,\boldsymbol{\tau}^{\top}]^{\top}$,
with the same ordering for its nominal and feedback increments, the disturbance
and actuator matrices are
\begin{align*}
B_{1d}^{\rm o}&=
\begin{bmatrix}
0&0\\
0&C_\tau\\
\frac{1}{m}I_3&0\\
0&0
\end{bmatrix},&
B_2&=
\begin{bmatrix}
0_{3\times1}&0_{3\times3}\\
0_{3\times1}&C_\tau\\
-\frac{1}{m}\be_3&0_{3\times3}\\
0_{3\times1}&0_{3\times3}
\end{bmatrix},
\end{align*}
Here $E=B_{1d}^{\rm o}$, and its blocks have three columns each.

The quadrotor state-performance weights are collected analogously in
\begin{align*}
\boldsymbol{\sigma}&=(125,10,10,25,50,50,100,200,200,160),\\
\Sigma&=\operatorname{diag}(\boldsymbol{\sigma}).
\end{align*}
Together with the unit penalty on the feedback increment $\bu$ in
\eqref{eq:quad-input-split}, the performance channel is
\begin{align*}
C_x=\begin{bmatrix}0_{10\times2}&\Sigma\end{bmatrix},\ C_1^{\rm o}=\begin{bmatrix}C_x\\0_{4\times12}\end{bmatrix},\ D_{12}=\begin{bmatrix}0_{10\times4}\\I_4\end{bmatrix}.
\end{align*}
The physical measurement selection and synthesis channel are
\begin{align*}
C=C_2&=
\begin{bmatrix}
I_6&0_{6\times3}&0_{6\times3}\\
0_{3\times6}&0_{3\times3}&I_3
\end{bmatrix},&D_{21,n}&=I_9,
\end{align*}
so the measured components are
$[\phi,\theta,\psi,p,q,r,x,y,z]^{\top}$. The unprojected design channel is
$B_{1d}^{\rm o}$, and the projected CARE channels are specified by
\begin{align*}
P_B&=\operatorname{diag}(0,0,0,1,1,1,0,0,1,0,0,0),\notag\\*
P_C&=\operatorname{diag}(1,1,1,1,1,1,0,0,0,1,1,1),\notag\\*
\widetilde B_{1d}&=P_BB_{1d}^{\rm o},\qquad
\widetilde C_1=C_1^{\rm o}P_C.
\end{align*}
\begin{samepage}
For the full twelve-state Kalman estimator, $Q_w$ is the process-disturbance
covariance used in the filter design:
\begin{align*}
&Q_w=\operatorname{diag}(0.5^2I_3,0.1^2I_3),\\
&W_c=B_{1d}^{\rm o}Q_w(B_{1d}^{\rm o})^{\top}+10^{-4}I_{12},\\
&V=\operatorname{diag}(0.01^2I_3,0.05^2I_6),\qquad P_0=I_{12}.
\end{align*}
\end{samepage}
Here $B_{1d}^{\rm o}$ is the unprojected physical disturbance matrix.
The discrete covariance $W_k$ is obtained from $W_c$ by the same
approximation in Section~\ref{sec:kalman-estimation}, using
$h=0.002$~s for nominal tracking and $h=0.001$~s for the
limited-resource experiment.

For both examples, the dimensions in Section~2 are
\begin{align*}
 n&=12,\quad m_d=6,\quad m_n=9,\\
 m_1&=15,\quad m_2=4,\quad p_2=9.
\end{align*}
The performance dimension is $p_1=16$ for the F-16 and $p_1=14$
for the quadrotor. Completing the disturbance channel with zero columns
preserves its Gram matrix and the threshold in \eqref{eq3.19a}.
The full projected channels satisfy (A3):
\begin{equation*}
 D_{12}^{\top}\widetilde C_1=0,\;
 D_{12}^{\top}D_{12}=I_4,\;
 \widetilde B_1D_{21}^{\top}=0,\;
 D_{21}D_{21}^{\top}=I_9.
\end{equation*}

\section*{Acknowledgements}

T. Li was partially supported by the National Natural Science Foundation of China (NSFC) No. 12371377 and the Jiangsu Provincial Scientific Research Center of Applied Mathematics under Grant No. BK20233002. We thank Tianhe-2 and the Big Data Computing Center in Southeast University, China, for providing access to their computing resources.

The authors used AI for language editing. Figure~\ref{fig6.1} was generated using AI.

\section*{Data availability}

Data will be made available on request.

\bibliographystyle{unsrtnat}
\bibliography{strings,research_papers}
\end{document}

%% file: generated/paper_numbers.tex
\newcommand{\QuadrotorPositionRMSE}{0.0482}
\newcommand{\QuadrotorMaxTrajectoryDifference}{1.42\times 10^{-11}}

%% file: generated/paper_feedforward_numbers.tex
\newcommand{\QuadFeedforwardRMSE}{254.4354}

%% file: generated/paper_timing_table.tex
\begin{tabular}{lrrrrrr}
\toprule
& \multicolumn{2}{c}{\texttt{icare}} & \multicolumn{2}{c}{SDA} & \multicolumn{2}{c}{Newton} \\
\cmidrule(lr){2-3}\cmidrule(lr){4-5}\cmidrule(lr){6-7}
Model & Median (ms) & Speedup & Median (ms) & Speedup & Median (ms) & Speedup \\
\midrule
F-16 Aircraft & 0.865 & $1.0\times$ & 0.205 & $4.2\times$ & 0.171 & $5.0\times$ \\
Quadrotor & 1.117 & $1.0\times$ & 0.237 & $4.7\times$ & 0.282 & $4.0\times$ \\
\bottomrule
\end{tabular}

%% file: research_papers.bib
@String { Elsevier          = {Elsevier Science Publishers} }

@Article{chfl:2005,
  Title                    = {A structure-preserving doubling algorithm for continuous-time algebraic {R}iccati equations},
  Author                   = {E. K.-W. Chu and H.-Y. Fan and W.-W. Lin},
  Journal                  = j-LAA,
  Year                     = {2005},
  Pages                    = {55-80},
  Volume                   = {396},
  Doi                      = {10.1016/j.laa.2004.10.010}
}

@article{hukl:2026,
  author  = {Tsung-Ming Huang and Yueh-Cheng Kuo and Wen-Wei Lin and Chin-Tien Wu},
  title   = {A Mixed-Type {SDRE} Model with a {Kalman} Filter for Prescribed Impact Angle Guidance},
  journal = {Aerospace Science and Technology},
  volume  = {176},
  pages   = {111953},
  year    = {2026},
  doi     = {10.1016/j.ast.2026.111953}
}

@Article{huli:2009,
  author  = {Huang, T.-M. and Lin, W.-W.},
  journal = j-LAA,
  title   = {Structured doubling algorithms for weakly stabilizing {H}ermitian solutions of algebraic {R}iccati equations},
  year    = {2009},
  number  = {5--6},
  pages   = {1452-1478},
  volume  = {430},
  doi     = {10.1016/j.laa.2007.08.043},
}

@Article{hull:2017,
  author  = {Tsung-Ming Huang and R.-C. Li and W.-W. Lin and L.-Z. Lu},
  journal = j-JMS,
  title   = {Optimal Parameters for Doubling Algorithms},
  year    = {2017},
  number  = {4},
  pages   = {339-357},
  volume  = {50},
}

@Article{kimu:1989,
  Title                    = {Doubling algorithm for continuous-time algebraic {R}iccati equation},
  Author                   = {M. Kimura},
  Journal                  = j-IJSS,
  Year                     = {1989},
  Number                   = {2},
  Pages                    = {191-202},
  Volume                   = {20},
  Doi                      = {10.1080/00207728908910119}
}

@Article{laub:1979,
  Title                    = {A {S}chur method for solving algebraic {R}iccati equations},
  Author                   = {A. J. Laub},
  Journal                  = j-IEEE-AC,
  Year                     = {1979},
  Number                   = {6},
  Pages                    = {913-921},
  Volume                   = {24},
  Doi                      = {10.1109/TAC.1979.1102178}
}

@Article{lixu:2006,
  Title                    = {Convergence analysis of structure-preserving doubling algorithms for {R}iccati-type matrix equations},
  Author                   = {Lin, W.-W. and Xu, S.-F.},
  Journal                  = j-SIMAX,
  Year                     = {2006},
  Number                   = {1},
  Pages                    = {26-39},
  Volume                   = {28}
}

@Article{valo:1984,
  Title                    = {A symplectic method for approximating all the eigenvalues of a {H}amiltonian matrix},
  Author                   = {C. F. {Van Loan}},
  Journal                  = j-LAA,
  Year                     = {1984},
  Pages                    = {233-251},
  Volume                   = {61}
}

@Book{hull:2018,
  author    = {Huang, T.-M. and Li, R.-C. and Lin, W.-W.},
  publisher = {Society for Industrial and Applied Mathematics},
  title     = {Structure-Preserving Doubling Algorithms for Nonlinear Matrix Equations},
  year      = {2018},
  address   = {Philadelphia, PA},
}

@Article{arla:1984,
  author  = {Arnold III, W. F. and Laub, A. J.},
  journal = j-Proc-IEEE,
  title   = {Generalized eigenproblem algorithms and software for algebraic {R}iccati equations},
  year    = {1984},
  number  = {12},
  pages   = {1746-1754},
  volume  = {72},
  doi     = {10.1109/PROC.1984.13083},
}

@Article{Cime:2012,
  author    = {{\c{C}}imen, T.},
  journal   = j-JGCD,
  title     = {Survey of state-dependent {R}iccati equation in nonlinear optimal feedback control synthesis},
  year      = {2012},
  number    = {4},
  pages     = {1025--1047},
  volume    = {35},
  doi       = {10.2514/1.55821},
}

@Article{Cime:2010,
  author    = {{\c{C}}imen, T.},
  journal   = j-ANN-REV-CONT,
  title     = {Systematic and effective design of nonlinear feedback controllers via the state-dependent {R}iccati equation ({SDRE}) method},
  year      = {2010},
  number    = {1},
  pages     = {32--51},
  volume    = {34},
  doi       = {10.1016/j.arcontrol.2010.03.001},
  publisher = {Elsevier},
}

@Article{lili:2021,
  author    = {L.-G. Lin and W.-W. Lin},
  journal   = j-IEEE-AES,
  title     = {Computationally Efficient {SDRE} Control Design for 3-{DOF} Helicopter Benchmark System},
  year      = {2021},
  number    = {5},
  pages     = {3320--3336},
  volume    = {57},
  doi       = {10.1109/TAES.2021.3074211},
}

@Article{lilc:2018,
  author    = {Lin, L.-G. and Liang, Y.-W. and Cheng, L.-J.},
  journal   = j-SICOPT,
  title     = {Control for a Class of Second-Order Systems via a State-Dependent {R}iccati Equation Approach},
  year      = {2018},
  number    = {1},
  pages     = {1--18},
  volume    = {56},
  eprint    = {https://doi.org/10.1137/16M1073820},
}

@Article{livl:2015,
  author    = {Lin, L.-G. and Vandewalle, J. and Liang, Y.-W.},
  journal   = j-AUTO,
  title     = {Analytical representation of the state-dependent coefficients in the {SDRE/SDDRE} scheme for multivariable systems},
  year      = {2015},
  pages     = {106--111},
  volume    = {59},
}

@Article{cpws:2022,
  author  = {Chodnicki, M. and Pietruszewski, P. and Weso\l{}owski, M. and St\k{e}pie\'{n}, S.},
  journal = j-ACS,
  title   = {Finite-time {SDRE} control of {F}16 aircraft dynamics},
  year    = {2022},
  number  = {3},
  pages   = {557-576},
  volume  = {32},
  doi     = {10.24425/acs.2022.142848},
}

@Misc{isrlab:F16Model,
  author       = {{Intelligent Systems Research Laboratory}},
  title        = {{F16Model.jl}: Nonlinear model of {F-16} flight dynamics},
  year         = {n.d.},
  howpublished = {Software and aerodynamic data, GitHub},
  url          = {https://github.com/isrlab/F16Model.jl/tree/8b719078922b8fa786554c2446ba749c4ca7a447},
  note         = {Pinned revision 8b719078922b; accessed 11 September 2026},
}

@InProceedings{clou:1997,
  author    = {Cloutier, J. R.},
  booktitle = {Proceedings of the 1997 American Control Conference (Cat. No.97CH36041)},
  title     = {State-dependent {R}iccati equation techniques: an overview},
  year      = {1997},
  pages     = {932-936},
  volume    = {2},
  doi       = {10.1109/ACC.1997.609663},
}

@Article{smit:1968,
  author  = {R. A. Smith},
  journal = j-SIAMA,
  title   = {Matrix Equation ${XA + BX = C}$},
  year    = {1968},
  number  = {1},
  pages   = {198-201},
  volume  = {16},
}

@Article{hklw:2024,
  author   = {T.-M. Huang and Y.-C. Kuo and W.-W. Lin and C.-T. Wu},
  journal  = {Aerosp. Sci. Technol.},
  title    = {An optimal parameterized {N}ewton-type structure-preserving doubling algorithm for impact angle guidance-based {3D} pursuer/target interception engagement},
  year     = {2024},
  issn     = {1270-9638},
  pages    = {109674},
  volume   = {155},
  doi      = {https://doi.org/10.1016/j.ast.2024.109674},
  url      = {https://www.sciencedirect.com/science/article/pii/S1270963824008034},
}

@InProceedings{ChHu:2022,
  author    = {Cheng, Sheng-Wen and Hung, Hsin-Ai},
  booktitle = {2022 International Automatic Control Conference (CACS)},
  title     = {Robust State-Feedback ${H}_{\infty}$ Control of Quadrotor},
  year      = {2022},
  pages     = {1-7},
  doi       = {10.1109/CACS55319.2022.9969787},
}

@Article{neao:2022,
  author  = {Nekoo, S. R. and Acosta, J. \'{A}. and Ollero, A},
  journal = {Robotica},
  title   = {Quaternion-based state-dependent differential {R}iccati equation for quadrotor drones: {R}egulation control problem in aerobatic flight},
  year    = {2022},
  number  = {9},
  pages   = {3120-3135},
  volume  = {40},
  doi     = {10.1017/S0263574722000091},
}

@Article{Hewer:1993,
author = {Hewer, Gary},
title = {Existence Theorems for Positive Semidefinite and Sign Indefinite Stabilizing Solutions of {$H_\infty$} {Riccati} Equations},
journal = {SIAM Journal on Control and Optimization},
volume = {31},
number = {1},
pages = {16-29},
year = {1993},
doi = {10.1137/0331002},
}

@Article{dgkf:1989,
   author={Doyle, J.C. and Glover, K. and Khargonekar, P.P. and Francis, B.A.},
  journal={IEEE Transactions on Automatic Control}, 
  title={State-space solutions to standard {$H_2$} and {$H_{\infty}$} control problems}, 
   year={1989},
  volume={34},
  number={8},
  pages={831-847},
  doi={10.1109/9.29425}}

@InProceedings{eraa:2001,
  author    = {Erdem, Evrin B. and Andrew G. Alleyne},
  booktitle = {Proceedings of the 40th IEEE conference on decision and control (Cat. No. 01CH37228)},
  title     = {Experimental real-time {SDRE} control of an underactuated robot},
  year      = {2001},
  pages     = {2986-2991},
  volume    = {3},
  doi       = {10.1109/CDC.2001.980731},
}

@InProceedings{fgim:2015,
 author={Franzini, Giovanni and Innocenti, Mario},
  booktitle={2015 54th IEEE Conference on Decision and Control (CDC)}, 
  title={Nonlinear {H}-infinity control of relative motion in space via the state-dependent {Riccati} equations}, 
  year={2015},
  volume={},
  number={},
  pages={3409-3414},
  doi={10.1109/CDC.2015.7402733}}

@Article{beit:2012,
  author  = {Beikzadeh, Hossein and Taghirad, Hamid D.},
  journal = {ISA Transactions},
  title   = {Robust {SDRE} filter design for nonlinear uncertain systems with an ${H}_{\infty}$ performance criterion},
  year    = {2012},
  number  = {1},
  pages   = {146--152},
  volume  = {51},
  doi     = {10.1016/j.isatra.2011.09.003},
}

@Article{makc:2012,
  author  = {Mahony, Robert and Kumar, Vijay and Corke, Peter},
  journal = {IEEE Robotics \& Automation Magazine},
  title   = {Multirotor aerial vehicles: modeling, estimation, and control of quadrotor},
  year    = {2012},
  number  = {3},
  pages   = {20--32},
  volume  = {19},
  doi     = {10.1109/MRA.2012.2206474},
}

@Article{klei:1968,
  author  = {Kleinman, David L.},
  journal = j-IEEE-AC,
  title   = {On an iterative technique for {R}iccati equation computations},
  year    = {1968},
  number  = {1},
  pages   = {114--115},
  volume  = {13},
  doi     = {10.1109/TAC.1968.1098829},
}

@Article{feit:2009,
  author  = {Feitzinger, F. and Hylla, T. and Sachs, E. W.},
  journal = {SIAM Journal on Matrix Analysis and Applications},
  title   = {Inexact {K}leinman--{N}ewton method for {R}iccati equations},
  year    = {2009},
  number  = {2},
  pages   = {272--288},
  volume  = {31},
  doi     = {10.1137/070700978},
}

@Book{simo:2006,
  author    = {Dan Simon},
  publisher = {John Wiley \& Sons},
  title     = {Optimal State Estimation: {K}alman, {$H_{\infty}$}, and Nonlinear Approaches},
  year      = {2006},
}

@Article{lxy:2001,
  author  = {Wen-Wei Lin and Quan-Fu Xu and Fang-Bo Yeh},
  journal = {Annals of Operations Research},
  title   = {Invariant Subspace Approach to Linear {$H_{\infty}$}-Control via Measurement Feedback},
  year    = {2001},
  pages   = {379–388},
  volume  = {103},
  doi     = {10.1023/A:1012949827197},
}

@Article{gd:1988,
author = {Keith Glover and John C. Doyle},
title = {State-space formulae for all stabilizing controllers that satisfy an {$H_\infty$}-norm bound and relations to risk sensitivity},
journal = {Systems \& Control Letters},
volume = {11},
number = {3},
pages = {167-172},
year = {1988},
issn = {0167-6911},
doi = {https://doi.org/10.1016/0167-6911(88)90055-2},
url = {https://www.sciencedirect.com/science/article/pii/0167691188900552}
}

@Article{bowa:2007,
  author  = {Bogdanov, Alexander and Wan, Eric A.},
  journal = {Journal of Guidance, Control, and Dynamics},
  title   = {State-Dependent {R}iccati Equation Control for Small Autonomous Helicopters},
  year    = {2007},
  number  = {1},
  pages   = {47-60},
  volume  = {30},
  doi     = {10.2514/1.21910},
}

@Article{neol:2025,
  author  = {Nekoo, Saeed Rafee and Ollero, Anibal},
  journal = {Drone Systems and Applications},
  title   = {Experimental implementation of state-dependent {R}iccati equation control on quadrotors},
  year    = {2025},
  pages   = {1-16},
  volume  = {13},
  doi     = {10.1139/dsa-2024-0062},
}

@Article{lich:1993,
  author  = {Li, X. P. and Chang, B. C.},
  title   = {On the convexity of {$H_\infty$} {Riccati} solutions and its applications},
  journal = {IEEE Transactions on Automatic Control},
  year    = {1993},
  volume  = {38},
  number  = {6},
  pages   = {963--966},
  doi     = {10.1109/9.222311},
}


%% file: strings.bib
@PREAMBLE{"\def\noopsort#1{}"
# "\def\v#1{{\accent20 #1}} \let\^^_=\v"
# "\def\hbk{hardback}"
# "\def\pbk{paperback}"
}

@String{j-ANN-REV-CONT              = "Annu. Rev. Control"}

@String{j-ACS                  = "Arch. Control Sci."}

@String{j-AUTO                  = "Automatica"}

@String{j-IEEE-AC               = "IEEE Trans. Automat. Control"}

@String{j-IEEE-AES               = "IEEE Trans. Aerosp. Electron. Syst."}

@String{j-IJSS                  = "Int. J. Syst. Sci."}

@String{j-JGCD                  = "J. Guid. Control Dyn."}

@String{j-JMS                   = "J. Math. Study"}

@String{j-LAA                   = "Linear Algebra Appl."}

@string{j-PROC-IEEE             = "Proc. IEEE"}

@String{j-SIAMA                 = "SIAM J. Appl. Math."}

@String{j-SICOPT                = "SIAM J. Control Optim."}

@String{j-SIMAX                 = "SIAM J. Matrix Anal. Appl."}
